\newtheorem{theorem}{Theorem}%[section]
\newtheorem{lemma}[theorem]{Lemma}
\newtheorem{conj}[theorem]{Conjecture}
\newtheorem{cor}[theorem]{Corollary}
\newtheorem{obs}[theorem]{Observation}
\newtheorem{prop}[theorem]{Proposition}
\title{On the Regularity, Planarity and Edge Bounds of Link-irregular Graphs} 
\author{
	Alexander Bastien
	\qquad
	Omid Khormali\thanks{Corresponding author. Email: \texttt{ok16@evansville.edu}} \\
	\small Department of Mathematics\\[-0.8ex]
	\small University of Evansville\\[-0.8ex]
	\small Evansville, Indiana 47722, USA.\\
	\small \texttt{ab995@evansville.edu}\\
	\small \texttt{ok16@evansville.edu}
}
\begin{document}

	\maketitle
	
	%%%%%%%%%%%%%%%%%%%%%%%%%%%%%%%%%%%%%%%%%%%%%%%%%%%%%%%%%%%%%%%
	\begin{abstract}
		A graph $G$ is a link-irregular graph if every two distinct vertices of $G$ have non-isomorphic links. The link of a vertex $v$ in $G$ is the subgraph induced by the neighbors of $v$ in $G$. Ali, Chartrand and Zhang [{\it Discussiones Mathematicae. Graph Theory}, 45(1) (2025) p.95] conjectured that there exists no regular link-irregular graph.
		In this paper, we show that the existence of an $r$-regular link irregular graph is very likely for large enough $r$. In particular, we provide a 7-regular link irregular graph on 12 vertices, which serves as a counterexample to the conjecture. 
		Additionally, we prove that no bipartite link-irregular graphs exist, and there are no regular link-irregular graphs on $n$-vertices for $n \leq 9$. Also, we determine upper and lower bounds for the number of edges of link-irregular graphs. Furthermore, we show the minimum number of edges in a link-irregular graph on the $n$ vertices is $\Omega(n\sqrt{\log n})$. Finally, we prove that all but finitely many link-irregular graphs are non-planar, and there is no regular link-irregular planar graphs.
	\end{abstract}
	
	%%%%%%%%%%%%%%%%%%%%%%%%%%%%%%%%%%%%%%%%%%%%%%%
	
	\section{Introduction}
	A graph $G$ is defined as a pair $(V(G),E(G))$ such that $V(G)$ represents a vertex set and $E(G)$ represents an edge set. We denote the number of vertices in a graph $G$ by $n(G)$, and the number of edges in the graph $G$ by $e(G)$. Throughout this paper, we consider only simple graphs that have no loops or multiple edges. The degree of a vertex $u$, denoted $d(u)$, is the number of edges connected to $u$, and $D(G)$ denotes the set of degrees of the vertices of graph $G$. The degree sum formula states that $\sum_{v\in V(G)}d(v)=2e(G)$. We write $u \leftrightarrow v$ to indicate that the vertices $u$ and $v$ are adjacent. A graph is irregular if no two vertices in the graph have the same degree. However, it is known that no such graph exists because there is no simple graph in which all vertex degrees are distinct. However, link-irregular graphs go a step further by requiring that for every pair of distinct vertices $u,v\in V(G)$, the neighborhood subgraphs $G[N(u)]$ and $G[N(u)]$ are non-isomorphic. Link-irregularity is a slightly weaker condition than irregularity, and this difference allows an infinite number of link-irregular graphs to exist. That is:
	\[
	G[N(u)]\ncong G[N(v)]   \ \ \ \ \ \ \ \ \ \ \     \forall u,v\in V(G), u\neq v.
	\]
	
	Non-isomorphic graphs are graphs with different structures, and their vertex sets cannot be matched while keeping the same edges. Here, $N(u)$ is the set of vertices which are connected to the vertex u in a graph $G$, and $G[N(u)]$  is the induced subgraph on the vertices of $N(u)$ in the graph $G$. The link $L(v)$ of a vertex $v$ in a graph $G$ is the same as $G[N(v)]$, that is, $L(v) = G[N(v)]$. We name $v$ the link owner in the link $L(v)$. Note that in link-irregular graphs, each vertex has a structurally unique neighborhood, making them distinct from other types of irregular graphs.  \\
	
	Recently, Ali, Chartrand, and Zhang introduced the link-irregular graphs \cite{akbar_book}. Beyond initial definitions, little is known about the properties of these graphs. They obtained the following result from the existence of the link-irregular graph \cite{akbar_paper}.
	
	\begin{theorem}[Ali, Chartrand, and Zhang]\label{akbar1}
		There exists a link-irregular graph of order $n$ if and only if $n\geq 6$.
	\end{theorem}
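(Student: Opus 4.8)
The statement splits into a necessity direction (no link-irregular graph exists for $n \le 5$) and a sufficiency direction (one exists for every $n \ge 6$), and I would attack the two by completely different means.

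For necessity, the key observation I would exploit is that \emph{every link-irregular graph is asymmetric} (i.e.\ has trivial automorphism group). Indeed, if $\phi \in \mathrm{Aut}(G)$ and $\phi(u) = v$ with $u \ne v$, then $\phi$ restricts to an isomorphism $G[N(u)] \to G[N(v)]$, so $L(u) \cong L(v)$, contradicting link-irregularity; hence every automorphism fixes every vertex and $\mathrm{Aut}(G)$ is trivial. I would then invoke the classical fact (Erd\H{o}s--R\'enyi) that the smallest asymmetric graph on more than one vertex has exactly $6$ vertices, so there are no asymmetric graphs, and therefore no link-irregular graphs, on $2,3,4,5$ vertices; the degenerate case $n=1$ is vacuous and excluded by convention. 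An alternative, self-contained route is a short case analysis using $n_0, n_1 \le 1$ and $n_2 \le 2$, where $n_k$ counts vertices of degree $k$, together with graphicality of the degree sequence and the fact that degrees $0$ and $n-1$ cannot coexist.

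For sufficiency I would proceed by an explicit base case plus two extension operations. First I would exhibit and verify one link-irregular graph on $6$ vertices. Then I would define: Op A, adjoin a new \emph{universal} vertex $w$ (the join $G \ast K_1$); and Op B, adjoin a new \emph{isolated} vertex. The plan is to show each operation preserves link-irregularity under a mild hypothesis. For Op B this is immediate: no neighborhood of an old vertex changes, so old links stay pairwise non-isomorphic, and the new link is the empty graph on $0$ vertices, which differs from every old link provided $G$ has no isolated vertex. For Op A, each old link becomes its cone $L_G(v) \ast K_1$; the crucial lemma is that coning is injective on isomorphism classes (one recovers $H$ from $H \ast K_1$ by deleting its universal vertices and re-attaching the correct number), so distinct old links remain distinct, while the new vertex $w$ has link $G$ on $n$ vertices, strictly larger than every old coned link provided $G$ has no universal vertex, so it too is distinct.

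Finally I would package the induction by tracking the two flags ``has a universal vertex'' and ``has an isolated vertex.'' No graph can have both, Op A turns the first flag on and the second off, and Op B does the reverse, so from the $6$-vertex base at least one operation is always applicable and the two alternate indefinitely, yielding a link-irregular graph of every order $n \ge 6$. I expect the main obstacle to be the sufficiency step --- specifically, proving the coning-injectivity lemma cleanly and organizing the vertex-count argument that separates the newly added vertex's link from all the perturbed old links --- whereas the necessity direction is short once the asymmetry observation is in hand.
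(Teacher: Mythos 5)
The paper never proves this statement: it is imported verbatim as a cited result of Ali, Chartrand and Zhang (reference \cite{akbar_paper}) and used as a black box (e.g.\ in the case $n\leq 6$ of Theorem~\ref{9-no-regular}), so there is no internal proof to compare yours against. Judged on its own, your argument is sound. The necessity direction is correct: any automorphism carrying $u$ to $v\neq u$ restricts to an isomorphism $G[N(u)]\to G[N(v)]$, so a link-irregular graph must be asymmetric, and the Erd\H{o}s--R\'enyi fact that no asymmetric graph exists on $2\leq n\leq 5$ vertices finishes it (your handling of $K_1$ by convention addresses a vacuity issue that is inherent in the statement itself, not a defect of your proof). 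Notably, this asymmetry lemma is exactly what the paper uses \emph{implicitly and without proof} in its $n=8,9$ cases of Theorem~\ref{9-no-regular}, where a nontrivial automorphism of $\bar G$ is said to contradict link-irregularity of $G$; your proof makes that step rigorous. The sufficiency direction also checks out: cone-injectivity holds because the universal vertices of $H\ast K_1$ are the apex together with the universal vertices of $H$, so $H$ is recoverable up to isomorphism from $H\ast K_1$ by deleting all universal vertices and re-joining one fewer; the order count separating the new vertex's link from the old coned links needs precisely your no-universal-vertex hypothesis (and Op B needs no-isolated-vertex), and since no graph on at least two vertices has both a universal and an isolated vertex, one operation is always applicable, so induction from the known $6$-vertex base graph (Figure~\ref{fig:unique-6}) yields every order $n\geq 6$. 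What your route buys is a self-contained, constructive replacement for the citation; what it costs is reliance on the classical minimum-asymmetric-graph fact (or the small-case analysis you sketch) plus verification of the base graph, which the paper takes from the literature in any event.
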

	The authors provided the unique link-irregular graph with 6 vertices in \cite{akbar_paper}, and it is
	%\begin{figure}[H]
	%    \centering
	%    \includegraphics[scale=0.3]{}
	%    \caption{The unique link-irregular graph with 6 vertices \cite{akbar-%paper}.}
	%    \label{fig:unique-6}
	%\end{figure}
	
	\begin{figure}[htbp]
		\centering
		\includegraphics[width=0.28\textwidth]{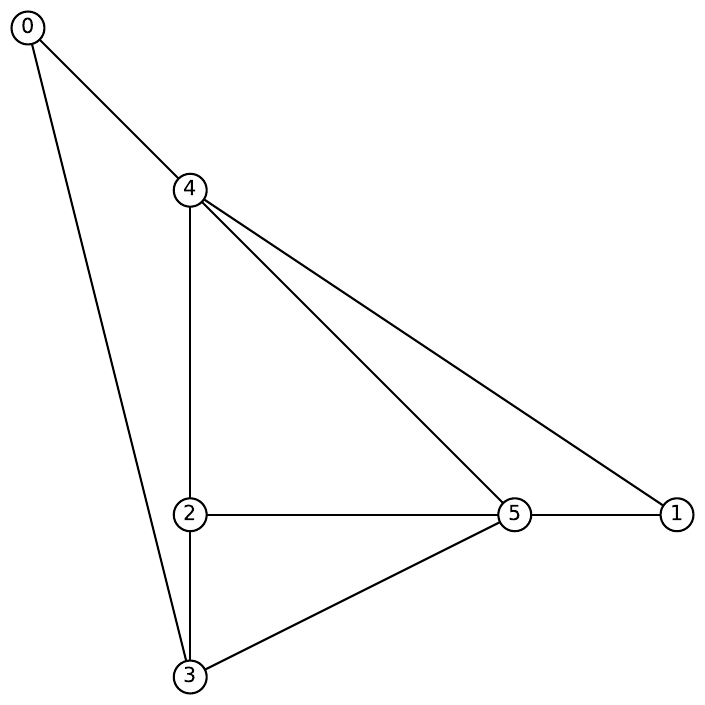}
		\caption{The unique link-irregular graph with 6 vertices \cite{akbar_paper}.}
		\label{fig:unique-6}
	\end{figure}
	
	The authors also investigated properties related to the degree sets of link-irregular graphs. The following results were established in~\cite{akbar_paper}. 
	\begin{theorem}[Akbar, Chartrand and Zhang]\label{degree-set}
		For each integer $n\geq 2$, there is no link-irregular graph $G$ of order $n$ such that $|D(G)| = n$ or $|D(G)| = n-1$.
	\end{theorem}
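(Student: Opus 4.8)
The plan is to dispatch the two sub-claims separately. For $|D(G)| = n$ there is nothing special about link-irregularity: in any simple graph on $n \geq 2$ vertices the degrees lie in $\{0,1,\dots,n-1\}$, but a vertex of degree $0$ and a vertex of degree $n-1$ cannot coexist, so $|D(G)| \leq n-1$ always and $|D(G)| = n$ is impossible. The real content is the case $|D(G)| = n-1$. Here I would first record the elementary structure forced by the hypothesis. Since $n$ vertices realize only $n-1$ distinct degrees, a pigeonhole count shows that exactly one degree value $r$ is attained by exactly two vertices $u,v$, while every other value is attained exactly once. Moreover, because $D(G) \subseteq \{0,\dots,n-1\}$ cannot contain both $0$ and $n-1$, the only ways to have $|D(G)| = n-1$ are $D(G) = \{1,2,\dots,n-1\}$ (so $G$ has a universal vertex) or $D(G) = \{0,1,\dots,n-2\}$ (so $G$ has an isolated vertex).

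The key reduction is that vertices of different degrees have links on different numbers of vertices, hence automatically non-isomorphic links. Therefore the only pair that could witness link-irregularity is the repeated pair $u,v$ of common degree $r$, and the theorem reduces to the single assertion $G[N(u)] \cong G[N(v)]$. I would prove the stronger statement that $u$ and $v$ are \emph{twins}, i.e.\ the transposition $(u\,v)$ is an automorphism of $G$, equivalently $N(u)\setminus\{v\} = N(v)\setminus\{u\}$. Once this is known, the automorphism $(u\,v)$ carries $N(u)$ bijectively onto $N(v)$ preserving adjacency, so it restricts to an isomorphism $G[N(u)] \cong G[N(v)]$ and $G$ fails to be link-irregular.

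To establish the twin property I would peel off extreme vertices and induct on $n$, with base cases $n \in \{2,3\}$ checked by hand: on $P_3$ and on $K_2$, together with their complements, the two equal-degree vertices are visibly twins. For $n \geq 4$, in the universal-vertex case I delete the unique vertex $w$ of degree $n-1$ (uniqueness holds because two universal vertices would force the degree-$1$ vertex to have degree at least $2$); this lowers every remaining degree by $1$ and yields a graph $G-w$ of the isolated-vertex type on $n-1$ vertices whose number of distinct degrees is again one less than its order, with the repeated pair preserved at degree $r-1$. Symmetrically, in the isolated-vertex case I delete the unique isolated vertex $z$ (uniqueness because two isolated vertices would cap the maximum degree below $n-2$); this leaves every degree unchanged and yields a universal-vertex graph $G-z$ on $n-1$ vertices of the same shape, again preserving the repeated pair. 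In both moves the repeated pair never coincides with the deleted vertex, since its degree $r$ lies strictly between the two extremes for $n \geq 4$, so the inductive hypothesis applies to the smaller graph and returns that $u,v$ are twins there. I would then check that twinhood lifts back: re-attaching a universal vertex adjacent to both $u$ and $v$, or an isolated vertex adjacent to neither, keeps $(u\,v)$ an automorphism.

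The main obstacle is the inductive peeling argument itself—verifying that each deletion really returns a graph of the same ``almost-irregular'' type one order smaller, that the repeated-degree pair is never the vertex removed, and that the twin relation survives in both directions under the deletions. The two uniqueness facts (exactly one universal vertex in the first case, exactly one isolated vertex in the second) are what make the peeling well-defined and impose the rigid, antiregular structure; getting this bookkeeping exactly right, including the degree-range constraints that pin down the admissible $r$, is where the care lies. The isomorphism-of-links conclusion is then immediate from the resulting automorphism.
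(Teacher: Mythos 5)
This theorem is not proved in the paper at all: it is imported as a known result of Ali, Chartrand and Zhang (cited from their \emph{Discussiones Mathematicae} paper), so there is no in-paper proof to compare against. Judged on its own merits, your proof is correct and self-contained. The $|D(G)|=n$ case is the standard observation that degrees $0$ and $n-1$ cannot coexist. For $|D(G)|=n-1$ you correctly pin down the two possible degree sets $\{1,\dots,n-1\}$ and $\{0,\dots,n-2\}$, note that only the unique equal-degree pair $u,v$ can possibly have isomorphic links, and then prove the stronger twin property $N(u)\setminus\{v\}=N(v)\setminus\{u\}$ by peeling off the (unique) universal or isolated vertex and inducting; the lifting of twinhood back through the deleted vertex is verified correctly, and the transposition $(u\,v)$ then restricts to an isomorphism $G[N(u)]\cong G[N(v)]$, killing link-irregularity. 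Two small remarks. First, your phrase that the repeated degree $r$ ``lies strictly between the two extremes'' claims slightly more than you prove; what you actually establish (and all the induction needs) is $r\neq n-1$ in the universal-vertex case and $r\neq 0$ in the isolated-vertex case, which follow exactly from your two uniqueness arguments. Second, graphs with $|D(G)|=n-1$ are the classical \emph{antiregular} graphs, known to be unique up to isomorphism in each order; your induction avoids invoking that uniqueness and re-derives only the structural fact needed, which makes the argument more elementary than citing the classification. Incidentally, the same twin observation immediately yields the paper's Theorem~\ref{no-bipartite} and Theorem~\ref{girth}, which rely on this result.
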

	
	\begin{theorem}[Akbar, Chartrand and Zhang]
		There exists a link-irregular graph $G$ of order $n$ such that
		$|D(G)| = n-2$ if and only if $n \geq 7$.
	\end{theorem}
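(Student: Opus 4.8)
The plan is to treat the two implications separately, with the backward (existence) direction carrying essentially all of the work.

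For the forward direction I would argue by contradiction on small orders. By Theorem~\ref{akbar1} there is no link-irregular graph at all when $n \le 5$, so the only case to rule out is $n = 6$, where $|D(G)| = n-2 = 4$ would be required. A graph on $6$ vertices with exactly $4$ distinct degrees must have degree multiplicities of one of two shapes: a single degree value occurring three times, or two degree values each occurring twice; in either shape the repeated-degree vertices must carry pairwise non-isomorphic links. Since by Theorem~\ref{akbar1} the link-irregular graph on $6$ vertices is unique (Figure~\ref{fig:unique-6}), and by Theorem~\ref{degree-set} its degree set already has size at most $4$, it suffices to read off its degree set and observe that it has size $3$, in particular not $4$. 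This gives the contradiction and forces $n \ge 7$.

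For the backward direction I would build every required graph from a single seed by two order-increasing operations. Let $I(H) = H \sqcup K_1$ adjoin an isolated vertex and let $U(H) = H \vee K_1$ adjoin a dominating (universal) vertex. The key observations are: (i) if $H$ is link-irregular with no isolated vertex, then $I(H)$ is link-irregular with $|D(I(H))| = |D(H)|+1$, because adjoining an isolated vertex changes no existing link, while the new vertex has the empty link and the brand-new degree $0$; and (ii) if $H$ is link-irregular with no universal vertex, then $U(H)$ is link-irregular with $|D(U(H))| = |D(H)|+1$. In case (ii) every old link $L(u)$ is replaced by its cone $L(u) \vee K_1$, and one needs that coning is injective on isomorphism classes (equivalently, by complementation, that deleting an isolated vertex is cancellative), so the old links remain pairwise non-isomorphic; the new apex has link equal to all of $H$, and the absence of a universal vertex in $H$ forces the apex's degree $|V(H)|$ and its link-order to strictly exceed those of every coned old link, preventing any collision and making the apex's degree genuinely new.

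The two operations toggle exactly the hypotheses they need: $I(H)$ always has an isolated but no universal vertex, while $U(H)$ always has a universal but no isolated vertex, so starting from a seed $G_7$ that has neither I can alternate $U, I, U, I, \dots$ indefinitely, each step raising both $n$ and $|D|$ by one and hence preserving the identity $|D| = n-2$. Since $G_7$ supplies the base case $n = 7$ and the alternation reaches every larger order, this yields a link-irregular graph with $|D(G)| = n-2$ for all $n \ge 7$. The main obstacle I anticipate is twofold: exhibiting and fully verifying an explicit seed $G_7$ that is link-irregular, has $|D| = 5$, and has neither an isolated nor a universal vertex; and pinning down the cone-injectivity step in operation $U$, since link-irregularity is a fragile global property and coning rewrites every link simultaneously.
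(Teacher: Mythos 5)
First, a point of comparison: the paper does not prove this statement at all --- it is quoted as a known result of Ali, Chartrand and Zhang \cite{akbar_paper} --- so your argument can only be judged on its own terms. Your framework is essentially sound. The forward direction correctly reduces to inspecting the unique graph of order $6$, whose degree set is $\{2,3,4\}$, of size $3$ rather than $4$. Your two operations also behave exactly as claimed: adjoining an isolated vertex leaves every old link untouched and contributes the order-$0$ link and the new degree $0$; adjoining a universal vertex replaces each $L(u)$ by $L(u) \vee K_1$ and contributes the link $H$ itself, whose order exceeds that of every coned link when $H$ has no universal vertex; and cone-injectivity does follow by complementation, since $\overline{A \vee K_1} = \overline{A} \sqcup K_1$ and cancelling a $K_1$ from a disjoint union is legitimate by uniqueness of the decomposition into connected components. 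The genuine gap is that the backward direction never gets off the ground: the seed $G_7$ --- a link-irregular graph of order $7$ with $|D(G_7)| = 5$ --- is nowhere exhibited, and you explicitly defer it as the main obstacle. Every graph your induction produces is built from this seed, so the existence half of the theorem (the substantive half) remains unproved as written. Note in particular that the seed cannot be reached from the order-$6$ graph by your own operations, since applying $I$ or $U$ to a graph with $|D| = 3$ yields $|D| = 4$ on $7$ vertices, not $5$; a genuinely new construction is required.

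The gap is fillable, and cheaply: attach a pendant vertex to a vertex of maximum degree of the unique link-irregular graph of order $6$ (Figure \ref{fig:unique-6}). The resulting graph has degree sequence $(1,2,2,3,3,4,5)$, hence degree set $\{1,2,3,4,5\}$ of size $5 = n-2$, and a direct check shows it is link-irregular: the seven links have orders $1,2,2,3,3,4,5$, and the two pairs of equal order are distinguished by their numbers of edges. This graph has neither an isolated nor a universal vertex, so it serves as your $G_7$. One further simplification: your insistence that the seed have neither an isolated nor a universal vertex is superfluous, because no graph on at least two vertices can have both; from any order-$7$ link-irregular graph with $|D| = 5$ you may simply apply whichever of $U$ or $I$ is admissible as the first step and alternate thereafter, since each operation produces a graph to which the other one applies.
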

	
	In addition, the regularity of link-irregular graphs was studied in~\cite{akbar_book}, leading to the following nonexistence result.
	\begin{theorem}[Akbar, Chartrand and Zhang]\label{no-regular}
		For $d = 0,1,2,3,4$, no $d$-regular link-irregular graph exists.
	\end{theorem}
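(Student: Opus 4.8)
The plan is to exploit the defining feature of a $d$-regular graph: every link $L(v)=G[N(v)]$ is a graph on exactly $d$ vertices. In a link-irregular graph all $n$ links must be pairwise non-isomorphic, so $n$ cannot exceed the number $g(d)$ of isomorphism classes of graphs on $d$ vertices. A direct count gives $g(0)=g(1)=1$, $g(2)=2$, $g(3)=4$, and $g(4)=11$. By Theorem~\ref{akbar1} a link-irregular graph has $n\ge 6$, so for $d\in\{0,1,2,3\}$ we would need $6\le n\le g(d)\le 4$, a contradiction. This disposes of the first four cases immediately.

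The case $d=4$ survives the counting bound only as $6\le n\le 11$, so I would first prune the list of $11$ admissible links. The key observation is that if $L(v)$ contains a vertex $u$ of degree $3$ \emph{within the link} (i.e.\ $u$ is adjacent to the other three vertices of $N(v)$), then $u$ is adjacent to $v$ and to those three vertices, which already exhausts its degree; hence $N(u)$ is completely determined, and a one-line computation of $G[N(u)]$ shows $L(u)\cong L(v)$, violating link-irregularity. The $4$-vertex graphs possessing such a dominating vertex are exactly $K_4$, $K_4$ minus an edge, the paw (a triangle with a pendant edge), and the star $K_{1,3}$. Eliminating these four leaves only $7$ possible link types, whence $n\le 7$ and so $n\in\{6,7\}$.

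To finish I would pass to complements: if $G$ is $4$-regular on $n$ vertices then $\overline{G}$ is $(n-5)$-regular. For $n=6$, $\overline{G}$ is a perfect matching, so $G=\overline{3K_2}=K_{2,2,2}$ is the octahedron, which is vertex-transitive and therefore has all links isomorphic. For $n=7$, $\overline{G}$ is a $2$-regular graph on $7$ vertices, i.e.\ $C_7$ or $C_3\cup C_4$; the graph $\overline{C_7}$ is again vertex-transitive, while in $\overline{C_3\cup C_4}$ the three vertices coming from the triangle each have link $\cong 2K_2$. In every case two vertices share an isomorphic link, so no $4$-regular link-irregular graph exists on $6$ or $7$ vertices, completing the proof.

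The main obstacle is the case $d=4$, where the counting bound alone is far too weak. The crux is the dominating-vertex elimination of dense links: it is what collapses the range from $n\le 11$ down to $n\le 7$, small enough that the handful of $4$-regular graphs can be enumerated through their sparse complements. Some care is needed to verify the ``forced neighborhood'' computation for each of the four dense link types and to confirm that the complements for $n=6,7$ are exactly as claimed.
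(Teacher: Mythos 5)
Your proposal is correct, but there is an important contextual point: the paper does not prove Theorem~\ref{no-regular} at all --- it is quoted as a known result of Ali, Chartrand and Zhang from \cite{akbar_book}, so there is no in-paper proof to compare against. Judged on its own terms, your argument is sound and complete. The counting step is right: the $n$ links of a $d$-regular link-irregular graph are pairwise non-isomorphic graphs on $d$ vertices, so $n\le g(d)$ with $g(0)=g(1)=1$, $g(2)=2$, $g(3)=4$, $g(4)=11$, and combining with $n\ge 6$ from Theorem~\ref{akbar1} disposes of $d\le 3$. Your dominating-vertex lemma checks out: if $u\in N(v)$ has degree $3$ in $L(v)$, then $4$-regularity forces $N(u)=\{v\}\cup\bigl(N(v)\setminus\{u\}\bigr)$, and the map fixing the common triple $N(v)\cap N(u)$ and sending $u\mapsto v$ is an isomorphism $L(v)\to L(u)$, since both $u$ and $v$ dominate that triple and induce the same graph on it; the four $4$-vertex graphs with a dominating vertex are exactly $K_4$, the diamond, the paw, and $K_{1,3}$, so $n\le 7$ as you claim. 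The complement endgame is also right: for $n=6$, $\overline{G}=3K_2$ gives $G=K_{2,2,2}$, which is vertex-transitive; for $n=7$, $\overline{G}$ is $2$-regular, hence $C_7$ or $C_3\cup C_4$, and $\overline{C_7}$ is vertex-transitive while in $\overline{C_3\cup C_4}$ each of the three former triangle vertices has link $\overline{C_4}\cong 2K_2$. It is worth noting that each of your three tools mirrors a technique this paper deploys elsewhere for \emph{other} statements: the bound $n\le g(r)$ opens Section 3, the ``link owner versus high-degree vertex inside a link'' argument reappears in the proof that no regular planar link-irregular graph exists, and the complement-plus-automorphism trick is exactly how Theorem~\ref{9-no-regular} handles $n=8,9$. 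So you have produced a legitimate, self-contained derivation of a result the paper only cites, built from the same style of reasoning the authors use for their new theorems.
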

	
	Then, they proposed the following conjecture \cite{akbar_book}.
	\begin{conj}\label{conjecture}
		There is no regular link-irregular graph.
	\end{conj}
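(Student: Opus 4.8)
The plan is to prove the conjecture by extending Theorem~\ref{no-regular} from the range $d \le 4$ to all $r \ge 5$, arguing by contradiction. Suppose $G$ were an $r$-regular link-irregular graph. Then every link $L(v) = G[N(v)]$ is a graph on exactly $r$ vertices, so link-irregularity says that the $n$ links are pairwise non-isomorphic graphs on $r$ vertices. If $g_r$ denotes the number of unlabeled graphs on $r$ vertices, this forces $n \le g_r$; combined with $n \ge 6$ from Theorem~\ref{akbar1}, the smallest cases follow from a counting argument whenever $g_r$ (suitably reduced) drops below $6$.

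First I would isolate the forbidden \emph{extreme} links. The complete link cannot occur: if $L(v) \cong K_r$, then $N[v]$ induces a $K_{r+1}$, and $r$-regularity forces this $K_{r+1}$ to be an entire connected component, so all $r+1$ of its vertices have link $K_r$ --- contradicting link-irregularity. I would then try to run analogous arguments to forbid, or to force into isomorphic pairs, the near-complete and near-empty links, using the basic identity that for adjacent $u,v$ the degree of $u$ inside $L(v)$ equals the codegree $|N(u)\cap N(v)|$. For $r=5$ these reductions, together with $n \le g_5$, may already close the case by hand; for larger $r$ the real work begins.

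The central mechanism I would push on is global bookkeeping of within-link degrees. The codegree identity makes the within-link degree sequences of the $n$ links highly interdependent, and the summation $\sum_v e(L(v)) = 3T$ (with $T$ the number of triangles of $G$), together with handshake relations applied link-by-link, constrains which degree sequences can coexist inside a single $r$-regular host. The goal is to turn this interdependence into a \emph{collision}: a proof that two of the $n$ links must be isomorphic, contradicting the hypothesis, uniformly in $r$.

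I expect this last step to be the decisive obstacle. The counting bound $n \le g_r$ is essentially useless once $r \ge 5$, since $g_5 = 34$, $g_6 = 156$, $g_7 = 1044$, and the sequence grows super-exponentially, while the forbidden-extreme-link observations delete only polynomially many link-types. A genuine proof therefore demands a \emph{universal} structural invariant that forces a repeated link for every $r$ at once, rather than a finite case analysis. The hard part is precisely that the realizability constraints on within-link degree sequences appear to leave more and more freedom as $r$ grows, so it is far from clear that such an invariant exists; the proof will stand or fall on exactly this point --- whether regularity together with the codegree identity genuinely forces a collision, or instead admits $n$ pairwise non-isomorphic links once $r$ is large.
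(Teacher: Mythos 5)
There is a fatal problem here, and it is not a gap in your reasoning so much as a wrong target: the statement you set out to prove is false, and the paper's treatment of Conjecture~\ref{conjecture} is a \emph{disproof}. The paper exhibits an explicit 7-regular link-irregular graph on 12 vertices (found by a SageMath search over small regular graphs, with its edge set listed explicitly), and verifies link-irregularity by comparing the degree sets of the twelve links --- these separate every pair except $L(0)$ and $L(6)$, which are then distinguished by inspecting their adjacency structure. Consequently, the ``universal structural invariant'' your plan hinges on --- a mechanism forcing two isomorphic links in every $r$-regular graph --- cannot exist, and no refinement of the codegree bookkeeping or the triangle-count identity $\sum_v e(L(v)) = 3T$ can rescue the strategy. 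To your credit, your final paragraph diagnoses exactly the right pressure point: once $r \geq 5$ the bound $n \leq g_r$ leaves enormous slack (indeed $g_7 = 1044 \gg 12$, so a 12-vertex 7-regular graph has ample room for twelve pairwise non-isomorphic links), and the paper's first- and second-moment heuristics (Theorems~\ref{first-moment} and~\ref{EX}) argue that when $n \ll g(r)$ distinct links are the typical situation rather than the exceptional one. Your hesitation was the correct mathematical instinct; the proposal errs only in betting on the collision rather than on the counterexample.

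The salvageable content of your approach is precisely what the paper uses for its \emph{restricted} nonexistence results. Your forbidden-extreme-link observation (a $K_r$ link forces a $K_{r+1}$ component all of whose vertices share the same link) is sound, and arguments of exactly this local flavor appear in the paper: Lemma~\ref{high-degree} bounds the number of vertices of degree $n-2$, Theorem~\ref{9-no-regular} rules out regular link-irregular graphs on $n \leq 9$ vertices via complement automorphisms for the $n = 8, 9$ cases, and the planarity section forbids specific five-vertex link types (including links containing a degree-4 vertex, which would replicate the link owner's link) to show no regular planar link-irregular graph exists. So if you want a true theorem in the spirit of your proposal, aim it at those restricted settings --- small $n$, bipartite hosts, or planar hosts --- rather than at the conjecture itself, which fails at $r = 7$, $n = 12$.
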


	In the following of this paper, we disprove the above conjecture by presenting a counterexample. We also obtain several additional results concerning the girth, bounds on the number of edges, and planarity of link-irregular graphs. Furthermore, we explore the link-irregularity of trees and bipartite graphs.
	
	%%%%%%%%%%%%%%%%%%%%%%%%%%%%%%%%%%%%%%%%%%%%%%%%%%%%%%%%%%%%%%%%%%%
	
	\section{Initial Results}
	
	We start by showing that trees and bipartite graphs are not link-irregular graphs. The following is the known result.
	
	\begin{theorem}[\cite{west}]\label{tree}
		Every tree with at least two vertices has at least two leaves.
	\end{theorem}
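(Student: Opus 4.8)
The plan is to give two independent arguments and present the cleaner one as the main proof, using only the fact that a tree is a connected graph with no cycles. Let $T$ be a tree with $n \geq 2$ vertices. My primary approach is the \emph{longest path} method: since $T$ is connected with at least two vertices it contains at least one edge, so among all paths in $T$ I can choose one of maximum length, say $P = v_0 v_1 \cdots v_k$ with $k \geq 1$. I would then claim that both endpoints $v_0$ and $v_k$ are leaves, which suffices because $k \geq 1$ forces $v_0 \neq v_k$, yielding two \emph{distinct} leaves.

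The heart of the argument is showing that $v_0$ (and symmetrically $v_k$) has degree $1$. I would argue by contradiction: suppose $v_0$ has a neighbor $u$ other than $v_1$. There are two cases. If $u$ does not lie on $P$, then $u v_0 v_1 \cdots v_k$ is a strictly longer path, contradicting the maximality of $P$. If $u$ does lie on $P$, say $u = v_j$ with $j \geq 2$, then $v_0 v_1 \cdots v_j v_0$ is a cycle in $T$, contradicting that $T$ is acyclic. Hence $v_0$ has no neighbor besides $v_1$, so $d(v_0) = 1$; the same reasoning applied to the other end gives $d(v_k) = 1$.

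The step I expect to be the main (if mild) obstacle is the second case above, namely ruling out that the extra neighbor of an endpoint lands back on the path, since this is exactly where the defining acyclic property of trees must be invoked rather than mere connectivity. As a backup I would record a purely quantitative proof: a tree on $n$ vertices has $n-1$ edges, so $\sum_{v \in V(T)} d(v) = 2n - 2$, while connectivity forces $d(v) \geq 1$ for every $v$. If at most one vertex were a leaf, then at least $n-1$ vertices would have degree $\geq 2$ and the remaining one degree $\geq 1$, giving $\sum_{v \in V(T)} d(v) \geq 2(n-1) + 1 = 2n - 1$, a contradiction. Either route produces the two required leaves.
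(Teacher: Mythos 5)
Your primary (longest-path) argument is correct and complete. One point of comparison worth noting: the paper does not prove this statement at all --- it quotes it as a known result from West's \emph{Introduction to Graph Theory}, so there is no ``paper proof'' to match against. Your main argument is in fact the standard textbook proof cited there: take a path of maximum length, and show each endpoint can have no neighbor off the path (by maximality) nor back on the path (by acyclicity), so both endpoints are leaves, and they are distinct since the path has positive length. All cases are handled correctly, including the one you flagged as the crux, where the acyclic hypothesis is genuinely needed.

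One caution about your backup argument: it invokes the fact that a tree on $n$ vertices has exactly $n-1$ edges. In the usual development (including West's), that edge count is itself proved by induction via leaf deletion, which presupposes the very lemma you are proving --- so presenting the degree-sum argument as an independent proof risks circularity unless you also supply a proof of the edge count that avoids leaves (e.g., induction on edges, splitting the tree by removing one edge). Since your longest-path argument is self-contained and needs only connectivity and acyclicity, it is the right one to feature, exactly as you chose to do.
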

	
	Using this result, we have the following result for trees.
	\begin{theorem}
		Trees are not link-irregular graphs.
	\end{theorem}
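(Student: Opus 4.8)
The plan is to exploit the simple fact that every leaf of a graph carries the same link, and then to feed this into Theorem~\ref{tree} to force a repeated link in any tree. First I would record the elementary observation that if $v$ is a leaf, i.e.\ $d(v)=1$, then $N(v)$ consists of exactly one vertex $w$, so the link $L(v)=G[N(v)]=G[\{w\}]$ is the one-vertex graph with no edges. Consequently, any two leaves of a graph have links that are isomorphic, each being this single-vertex graph.

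With this in hand, the argument is immediate. Let $T$ be a tree. If $T$ has at least two vertices, then Theorem~\ref{tree} guarantees the existence of two distinct leaves $u$ and $v$. By the observation above, $L(u)$ and $L(v)$ are both the single-vertex graph and hence $L(u)\cong L(v)$, so $T$ possesses two distinct vertices with isomorphic links. By definition $T$ is therefore not link-irregular.

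The only case left to address is the degenerate tree on a single vertex, which has no pair of distinct vertices and so does not directly contradict the defining condition; here I would simply note that a link-irregular graph must have order at least $6$ by Theorem~\ref{akbar1}, so this tree is not link-irregular either. I do not anticipate any genuine obstacle: the substantive input is entirely carried by Theorem~\ref{tree}, and the remaining content is the one-line remark that leaves force identical links.
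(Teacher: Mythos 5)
Your proof is correct and follows essentially the same route as the paper: invoke Theorem~\ref{tree} to get two leaves, observe that any two leaves have isomorphic (single-vertex) links, and conclude. You simply spell out the "it is clear" step and handle the one-vertex tree explicitly, which the paper leaves implicit.
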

	\begin{proof}
		By Theorem~\ref{tree}, each tree $T$ has at least two leaves $x, y \in V(T)$. Then, it is clear that $L(x) \cong L(y)$. Then $T$ is not a link-irregular graph.
	\end{proof}
	
	%The K\"onig theorem is a well-known result on bipartite graphs.
	%\begin{theorem}[K\"onig \cite{west}]\label{konig}
	%    If $G$ is an $r$-regular bipartite graph, then it has a perfect matching.
	%\end{theorem}
	
	The following result is for bipartite graphs.
	
	\begin{theorem}\label{no-bipartite}
		There are no bipartite link-irregular graphs.
	\end{theorem}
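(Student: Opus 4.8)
The plan is to exploit the defining feature of bipartite graphs: within either part there are no edges. First I would fix a bipartition $V(G) = A \cup B$ of a putative bipartite link-irregular graph $G$ and observe that for any vertex $v$, its entire neighborhood $N(v)$ lies in the part opposite to the one containing $v$. Since no two vertices of a single part are adjacent, the induced subgraph $L(v) = G[N(v)]$ contains no edges at all; that is, $L(v)$ is the empty graph $\overline{K_{d(v)}}$ on $d(v)$ vertices.

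The second step is to translate link-isomorphism into a degree condition. Two edgeless graphs are isomorphic precisely when they have the same number of vertices, so for any two distinct vertices $u, v$ we have $L(u) \cong L(v)$ if and only if $d(u) = d(v)$. Consequently, $G$ is link-irregular if and only if all of its vertices have pairwise distinct degrees, which is exactly the (ordinary) irregularity condition mentioned in the introduction.

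Finally I would invoke the standard fact that no simple graph on $n \geq 2$ vertices can have all distinct degrees: the degrees lie in $\{0, 1, \dots, n-1\}$, and a vertex of degree $n-1$ (adjacent to everything) forbids a vertex of degree $0$, so by pigeonhole two vertices must share a degree. Since a link-irregular graph has order at least $6$ by Theorem~\ref{akbar1}, this applies and yields two vertices $u, v$ with $d(u) = d(v)$, hence $L(u) \cong L(v)$, contradicting link-irregularity.

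I expect no genuine obstacle here; the only point requiring a little care is the degenerate case of isolated vertices (degree $0$), whose link is the null graph, but the degree-distinctness argument subsumes this uniformly. The real content is the single observation that every link is edgeless in the bipartite setting, which collapses link-irregularity down to the impossible notion of full degree-irregularity.
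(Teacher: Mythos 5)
Your proof is correct and follows essentially the same route as the paper: bipartiteness forces every link to be an edgeless graph, so link-irregularity collapses to requiring all vertex degrees to be distinct, which is impossible. The only cosmetic difference is that you establish this impossibility directly via the classical pigeonhole argument, whereas the paper cites its Theorem~\ref{degree-set} (whose $|D(G)|=n$ case is exactly that same fact).
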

	\begin{proof}
		The proof is by contradiction. Suppose there is a bipartite link-irregular graph. Then $\forall x, y \in V(G)$, we have $L(x) \ncong L(y)$. So since $L(x)$ and $L(y)$ are independent sets of vertices, then to have $L(x) \ncong L(y)$, the number of vertices in $L(x) \ncong L(y)$ should be different. Since $x$ and $y$ are selected arbitrarily, all of the vertices of $G$ should have different degrees. Then $|D(G)| = n$. But, this is a contradiction by Theorem ~\ref{degree-set}. Then $G$ is not a link-irregular graph.
		% The proof is by induction on $r$. By Theorem~\ref{no-regular}, the result holds for $r = 0, 1, 2, 3, 4$. So suppose $r\geq 5$. Let $G(X,Y)$ be an $r$-regular bipartite graph. So, by Theorem~\ref{konig}, $G$ has a perfect matching $M$. Then $G - M$ is $(r-1)$-regular and by induction, $G-M$ is not link-irregular graph. Hence, there are vertives $u$ and $v$ in $G-M$ such that $L(u) \cong L(v)$ in $G-M$. If $uv \in M$, then clearly $L(u) \cong L(v)$ in $G-M$, and $G$ is not link-irregular graph.   Note that if $u$ is in part $X$, then $N(u)$ is in $Y$ or vice versa. So, $N(u)$ is an independent set. 
	\end{proof}
	
	Note that the proof of Theorem~\ref{no-bipartite} gives an alternative proof for Theorem~\ref{tree} since the neighborhood of each vertex is an independent set. So, to have a link-irregular tree, all vertices should have a different degree, and this is impossible.\\
	
	Also, by Theorem~\ref{no-bipartite}, it is clear that there is no regular bipartite link-irregular graph. In addition, we have the following observation.
	\begin{obs}
		We have:
		\begin{enumerate}[label = (\alph*)]
			\item Hypercubes $Q_k = \{0, 1\}^k$ are not link-irregular graphs.
			\item If $G$ has no odd cycle, then $G$ is not a link-irregular graph.
		\end{enumerate}
	\end{obs}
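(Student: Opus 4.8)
The plan is to reduce both parts to Theorem~\ref{no-bipartite}, invoking the classical fact that a graph is bipartite if and only if it contains no odd cycle.

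For part (b) I would first recall the standard characterization (see, e.g.,~\cite{west}): a graph $G$ has no odd cycle if and only if $G$ is bipartite. Under the hypothesis that $G$ has no odd cycle, this immediately gives that $G$ is bipartite, and Theorem~\ref{no-bipartite} then yields that $G$ is not link-irregular. No case analysis is required; the whole statement is just the contrapositive packaging of Theorem~\ref{no-bipartite}, namely that any link-irregular graph must be non-bipartite and hence must contain an odd cycle.

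For part (a) I would observe that the hypercube $Q_k$ is itself bipartite: partition $\{0,1\}^k$ according to the parity of the number of $1$'s in each string. Every edge of $Q_k$ joins two strings differing in exactly one coordinate, so every edge flips the parity; thus the two parity classes form a proper bipartition and $Q_k$ has no odd cycle. Part (a) then follows directly from part (b) (equivalently, straight from Theorem~\ref{no-bipartite}). As an alternative self-contained argument for (a), one may use symmetry: $Q_k$ is vertex-transitive and triangle-free, so every link is an independent set on $k$ vertices, whence all links are isomorphic and for $k\geq 1$ the hypercube cannot be link-irregular.

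Because both parts collapse onto results already established in the excerpt, there is no substantive obstacle here. The only point needing care is the appeal to the odd-cycle characterization of bipartiteness in part (b); I would either cite it as standard via~\cite{west} or spell out the contrapositive so that the observation rests solely on Theorem~\ref{no-bipartite}.
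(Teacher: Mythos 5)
Your proposal is correct and matches the paper's own proof: both parts are reduced to Theorem~\ref{no-bipartite} by noting that $Q_k$ is bipartite and that a graph with no odd cycle is bipartite. You simply spell out the bipartition of $Q_k$ and the odd-cycle characterization explicitly, which the paper leaves implicit.
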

	\begin{proof}
		Since hypercubes $Q_k$ are bipartite graphs, they are not link irregular graphs. Also, since $G$ has no odd cycle, $G$ is a bipartite graph. The result holds by Theorem~\ref{no-bipartite}.
	\end{proof}
	
	The following result states that any link-irregular graph must have an odd cycle of length 3.
	\begin{theorem}\label{girth}
		If $G$ is a link-irregular graph of order $n$, then its girth is $3$. 
	\end{theorem}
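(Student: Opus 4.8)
The plan is to argue by contradiction and to reduce the problem to the degree-set obstruction already exploited for bipartite graphs in Theorem~\ref{no-bipartite}. Suppose $G$ is a link-irregular graph whose girth is not $3$; equivalently, $G$ contains no triangle. The first step is to record the elementary structural consequence of being triangle-free: for every vertex $v$, the neighborhood $N(v)$ is an independent set. Indeed, if two neighbors $x, y \in N(v)$ were adjacent, then $v, x, y$ would span a $3$-cycle, contradicting the absence of triangles.

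Next I would use this to pin down the isomorphism type of every link. Since $N(v)$ is independent, the link $L(v) = G[N(v)]$ is an edgeless graph on exactly $d(v)$ vertices. Two edgeless graphs are isomorphic precisely when they have the same number of vertices, so $L(u) \cong L(v)$ if and only if $d(u) = d(v)$.

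The decisive step is then identical in spirit to the proof of Theorem~\ref{no-bipartite}. Because $G$ is link-irregular, the links $L(u)$ and $L(v)$ are non-isomorphic for every pair of distinct vertices $u, v$, which by the previous paragraph forces $d(u) \neq d(v)$ whenever $u \neq v$. Hence every vertex of $G$ has a distinct degree, that is $|D(G)| = n$, contradicting Theorem~\ref{degree-set}. Therefore $G$ must contain a triangle, and since a triangle is a cycle of the shortest possible length, the girth of $G$ equals $3$.

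I expect the only point requiring a little care is the justification that ``girth not equal to $3$'' is the same as ``triangle-free'': the girth could a priori be undefined if $G$ were acyclic, but an acyclic graph is a forest and hence triangle-free, so the independent-neighborhood argument applies uniformly whether the girth is infinite or at least $4$. The remainder is routine once the reduction to pairwise distinct degrees is in place.
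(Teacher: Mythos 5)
Your proof is correct and follows essentially the same route as the paper: triangle-freeness forces every link to be edgeless, so link-irregularity would require all $n$ degrees to be distinct, contradicting Theorem~\ref{degree-set}. Your added care in handling the acyclic case (where the girth is undefined) is a small refinement the paper's proof glosses over by assuming girth at least $4$, but the argument is the same.
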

	\begin{proof}
		Let $G$ be a link-irregular graph with girth at least 4. Then the neighborhood of each vertex must be an independent set. Since $G$ is link-irregular graph, all vertices should have distinct degree. This is a contradiction by Theorem~\ref{degree-set}. Then the girth of $G$ is 3.
	\end{proof}
	
	\section{Disproving the Conjecture \ref{conjecture}}
	
	Note that if there is an $r$-regular link-irregular graph $G$ of order $n$, the number of vertices in $G$ cannot be more than the number of non-isomorphic graphs on $r$ vertices since the link of each vertex must be distinct. Let $g(r)$ be the number of non-isomorphic graphs on $r$ vertices. So, $n\leq g(r)$. We have 
	\[
	g(r) \geq \dfrac{2^{r \choose 2}}{r!}
	\]
	In this bound, the numerator counts the number of graphs on $r$ labeled vertices, and the denominator counts the number of ways to rearrange the labels on those vertices. \\
	
	In the following, we investigate the existence of regular link-irregular graphs using probabilistic methods. Although the Lov\'asz Local Lemma is a common tool in probabilistic existence proofs, it does not apply effectively to this problem due to the dependencies involved. Instead, we use the first and second moment methods to establish the existence of regular link-irregular graphs.

	\begin{theorem}\label{first-moment}
		For sufficiently large $g(r)$ and $n \ll g(r)$, an $r$-regular link irregular graph is highly likely to exist.
	\end{theorem}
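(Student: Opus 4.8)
\section*{Proof proposal for Theorem \ref{first-moment}}

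The plan is to run the first moment method on a random $r$-regular graph $G$ on $n$ vertices and to show that the expected number of pairs of vertices that share the same link type tends to $0$, so that such a graph is link-irregular with probability $1-o(1)$ (in particular, one exists). Let $X=\sum_{\{u,v\}}\mathbf{1}[L(u)\cong L(v)]$ count the ``bad'' pairs. By linearity and Markov's inequality, $\Pr[X\ge 1]\le\mathbb{E}[X]=\binom{n}{2}\,\Pr[L(u)\cong L(v)]$, where the last equality uses the symmetry of the model. Hence the entire statement reduces to showing $\binom{n}{2}\,\Pr[L(u)\cong L(v)]\to 0$.

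For the collision probability I would avoid analysing the joint distribution of the two links directly and instead take a union bound over the candidate isomorphisms. Write $p=r/n$ for the probability that two prescribed vertices are adjacent, and use that in the configuration (pairing) model the adjacency indicators inside a neighborhood behave, to leading order, like independent $\mathrm{Bernoulli}(p)$ variables. A fixed bijection $\sigma\colon N(u)\to N(v)$ is an isomorphism only if it matches the edge/non-edge status of all $\binom{r}{2}$ pairs, and each such pair is matched with probability $p^2+(1-p)^2$. Since there are at most $r!$ bijections, this yields $\Pr[L(u)\cong L(v)]\le r!\,(p^2+(1-p)^2)^{\binom{r}{2}}$, and therefore $\mathbb{E}[X]\le \binom{n}{2}\,r!\,(p^2+(1-p)^2)^{\binom{r}{2}}$.

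The next step is to choose the density so that the structural entropy beats the $r!$ and $\binom{n}{2}$ factors. Using $p^2+(1-p)^2=1-2p(1-p)\le e^{-2p(1-p)}$ and $r!\le e^{r\ln r}$, the bound tends to $0$ as soon as $p(1-p)\binom{r}{2}=\omega(r\ln r)$, with $\binom{n}{2}$ absorbed since it is only polynomial in $r$. A clean choice is $r\ll n\ll r^2/\ln r$, for instance $n=r^{3/2}$: then $p=r^{-1/2}\to 0$ while $p(1-p)\binom{r}{2}\sim \tfrac12 r^{3/2}=\omega(r\ln r)$, so $\mathbb{E}[X]\to 0$. This regime sits comfortably inside the necessary window $n\le g(r)$, since $g(r)\ge 2^{\binom{r}{2}}/r!$ dwarfs such $n$ for large $r$, which certifies the ``highly likely'' conclusion.

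The main obstacle is exactly the dependency that rules out the Lov\'asz Local Lemma: the adjacency indicators of a random $r$-regular graph are not truly independent, and the neighborhoods $N(u),N(v)$ typically overlap, so a single edge-variable can be counted in both links or constrained twice by the same bijection. I would control this by working throughout in the configuration model, where edge indicators are asymptotically independent up to $1+o(1)$ factors, and by exploiting that in the chosen regime the overlap satisfies $|N(u)\cap N(v)|\approx r^2/n=o(r)$, so only $o(r^2)$ of the $\binom{r}{2}$ constraints per bijection are ``recycled'' onto shared or coincident edge-variables. Absorbing these into the exponent replaces $(p^2+(1-p)^2)^{\binom{r}{2}}$ by $(p^2+(1-p)^2)^{\binom{r}{2}(1-o(1))}$, which leaves the estimate of the previous paragraph intact. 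Once the per-bijection agreement probability is pinned down this way, $\mathbb{E}[X]\to 0$ follows and establishes that an $r$-regular link-irregular graph is overwhelmingly likely to exist for all large $r$.
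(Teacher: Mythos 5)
Your proposal is correct in spirit but takes a genuinely different route from the paper. The paper's argument never samples a graph at all: it heuristically assigns each of the $n$ vertices a link type chosen uniformly at random from the $g(r)$ isomorphism classes, lets $X$ count vertices whose assigned type is unique, computes $\mathbb{E}[X] \approx n e^{-(n-1)/g(r)} \approx n$, and concludes existence from $\mathbb{E}[X] > 1$. That formulation has two weaknesses your version avoids: the random model does not correspond to any actual $r$-regular graph (an arbitrary assignment of link types need not be realizable by any graph), and the logical step is shaky, since link-irregularity requires \emph{every} vertex to have a unique link (i.e.\ $X = n$), which does not follow from $\mathbb{E}[X] > 1$. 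By instead sampling a genuine random $r$-regular graph, counting colliding \emph{pairs}, and driving $\mathbb{E}[X]$ to zero via the union bound over the $r!$ candidate bijections, you get the correct equivalence ($X = 0$ if and only if $G$ is link-irregular) and an honest first-moment conclusion, together with an explicit working regime $r \ll n \ll r^2/\ln r$. Two caveats are worth recording. First, your conclusion is narrower than the theorem's literal hypothesis $n \ll g(r)$: when $n$ grows much faster than $r^2$, a random $r$-regular graph has many vertices whose links are empty (hence isomorphic), so your method certifies existence only for a sub-range of $n$ --- though that suffices for the existence claim, and the paper's heuristic does not honestly cover the larger range either. Second, the configuration-model independence you invoke is delicate at this scale: you multiply $1+o(1)$ approximations across $\binom{r}{2}$ constraints, so the per-constraint error must be $o(1/r^2)$ for the product to survive; this requires an actual estimate (switchings, or direct pairing-model computations), not just the leading-order heuristic you state. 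At the paper's own level of rigor, however, your sketch is at least as sound as the published argument, and structurally it is the more defensible of the two.
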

	
	\begin{proof}
		For each $r$, we consider $r$-regular graphs $G$ on $n$ vertices where $n \ll g(r)$. For a vertex $v \in V(G)$, let $L(v)$ be the subgraph induced by its $r$ neighbors (the link of $v$). Since each $L(v)$ is a graph on $r$ vertices, there are at most $g(r)$ possible distinct link types.
		
		Suppose we assign to each vertex of $G$ one of the $g(r)$ possible link graphs uniformly at random. Let $X$ denote the number of vertices with unique links. Define indicator random variables $I_v$ for each vertex $v$ by:
		
		\[
		I_v = \begin{cases}
			1, & \text{if } L(v) \text{ is unique among all } L(u), u \neq v, \\
			0, & \text{otherwise}.
		\end{cases}
		\]
		
		Then $ X = \sum_{v \in V(G)} I_v $. While the events $I_v$ are not independent, we can estimate the expected value heuristically under the assumption that $n \ll g(r)$, which means the number of ways to assign links significantly exceeds the number of vertices.
		
		In such a setting, the probability that a randomly chosen link $L(v)$ coincides with another vertex’s link becomes small, and we expect that many vertices will have unique links. That is, $ \mathbb{E}[X] \approx n \cdot (1 - \frac{1}{g(r)})^{n-1} $. Using the approximation $\ln(1 - x) \approx -x$ for small $x$, we have
		\[
		\mathbb{E}[X] \approx n \cdot e^{-\frac{n-1}{g(r)}}.
		\]
		If $n \ll g(r)$, then $\frac{n-1}{g(r)}$ is small, and so $e^{-\frac{n-1}{g(r)}} \approx 1$, implying that $\mathbb{E}[X] \approx n$.
		
		So since $E[X]>1$, indicating that a $r$-regular graph with unique links is likely to exist.
	\end{proof}
	
	Next, we check the second-moment method, using the following result from \cite{zhao}.
	% \begin{theorem}[Chebyshev bound on the probability of non-existence]\label{var}
		%     For any random variable $X$, 
		%     \[
		%     P(X=0) \geq \dfrac{Var(X)}{(E[X])^2}
		%     \]
		% \end{theorem}
	
	\begin{theorem}\label{var2}
		If $E[X] > 0$ and $Var(X) = o(E[X])^2$, then $X>0$ and $X \sim E[X]$ with probability $1-o(1)$.
	\end{theorem}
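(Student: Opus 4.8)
The plan is to derive both conclusions directly from Chebyshev's inequality, which is the standard engine behind the second moment method. Throughout, we work within the implicit asymptotic regime of the preceding results (here $g(r)\to\infty$ with $n\ll g(r)$), so that the little-$o$ notation refers to this limit; in particular the hypothesis $\mathrm{Var}(X)=o(E[X]^2)$ is read as $\mathrm{Var}(X)/E[X]^2\to 0$, and ``$X\sim E[X]$ with probability $1-o(1)$'' means that for every fixed $\varepsilon>0$ the event $|X-E[X]|<\varepsilon E[X]$ holds with probability tending to $1$.

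First I would establish that $X>0$ with probability $1-o(1)$. The key observation is that the event $\{X=0\}$ forces $X$ to deviate from its mean by the full amount $E[X]$, that is, $\{X=0\}\subseteq\{|X-E[X]|\geq E[X]\}$. Applying Chebyshev's inequality to the larger event gives
\[
\Pr[X=0]\leq \Pr\big[|X-E[X]|\geq E[X]\big]\leq \frac{\mathrm{Var}(X)}{E[X]^2}=o(1),
\]
where the last equality is the hypothesis. Hence $\Pr[X>0]=1-o(1)$, using also that $E[X]>0$ so the threshold $E[X]$ is positive.

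Next I would establish the concentration statement. Fix an arbitrary constant $\varepsilon>0$ and apply Chebyshev's inequality with deviation threshold $\varepsilon E[X]$:
\[
\Pr\big[|X-E[X]|\geq \varepsilon E[X]\big]\leq \frac{\mathrm{Var}(X)}{\varepsilon^2 E[X]^2}=\frac{1}{\varepsilon^2}\cdot\frac{\mathrm{Var}(X)}{E[X]^2}=o(1),
\]
since $\varepsilon$ is fixed and $\mathrm{Var}(X)/E[X]^2\to 0$. Thus $|X-E[X]|<\varepsilon E[X]$ with probability $1-o(1)$, which is exactly the asymptotic equivalence $X\sim E[X]$. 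Combining the two parts, with probability $1-o(1)$ the variable $X$ is simultaneously positive and within a $(1\pm\varepsilon)$ factor of its mean.

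Since this is a classical result, I do not anticipate a genuine mathematical obstacle; the only points requiring care are the correct reading of the compressed notation $o(E[X])^2$ as $o(E[X]^2)$, and making explicit the asymptotic parameter governing the $o(1)$ statements, so that the Chebyshev bounds can be driven to zero uniformly for each fixed $\varepsilon$.
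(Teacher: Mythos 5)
Your proof is correct and follows exactly the standard second-moment/Chebyshev argument: the paper itself states this theorem without proof, importing it from Zhao's lecture notes \cite{zhao}, where the same two-step derivation (bounding $\Pr[X=0]$ by $\Pr\big[|X-E[X]|\geq E[X]\big]$, then applying Chebyshev with threshold $\varepsilon E[X]$ for concentration) is the canonical one. Your reading of the compressed hypothesis $Var(X)=o(E[X])^2$ as $\mathrm{Var}(X)/E[X]^2\to 0$ is also the intended interpretation, so there is nothing to correct.
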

	
	\begin{theorem}\label{EX}
		For large enough $g(r)$, the existence of an $r$-regular link irregular graph is very likely.
	\end{theorem}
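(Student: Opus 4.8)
The plan is to work inside the random link-assignment model already set up in the proof of Theorem~\ref{first-moment} (each of the $n$ vertices independently receives one of the $g(r)$ isomorphism types uniformly at random), but to track the quantity whose vanishing is \emph{equivalent} to link-irregularity rather than the statistic $X$. The assignment fails to be link-irregular exactly when some pair of vertices receives the same link type, so I would introduce the collision count
\[
Z \;=\; \#\bigl\{\, \{u,v\}\subseteq V(G)\,:\, u\neq v,\ L(u)\cong L(v)\,\bigr\},
\]
and observe that the assignment is link-irregular if and only if $Z=0$. This is the right object because a second-moment argument applied to $X$ (the number of vertices with a unique link) only yields concentration $X\sim\mathbb{E}[X]=n(1-o(1))$, which still permits $o(n)$ colliding vertices; concentration near a value that is merely close to $n$ cannot force the exact equality $X=n$ needed for full link-irregularity. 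Working with $Z$ sidesteps this gap entirely, and Theorem~\ref{var2} is in fact the appropriate tool only for the complementary regime where one wants to show that collisions \emph{do} occur.

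The first main step is a first-moment computation: since any two distinct vertices receive independent uniform types, they share a type with probability $1/g(r)$, so by linearity
\[
\mathbb{E}[Z] \;=\; \binom{n}{2}\,\frac{1}{g(r)} \;=\; \frac{n(n-1)}{2\,g(r)}.
\]
The second step is to locate the regime in which this expectation is $o(1)$: whenever $n = o\!\bigl(\sqrt{g(r)}\bigr)$ we get $\mathbb{E}[Z]\to 0$, which is the correct (birthday-type) threshold and is slightly stronger than the $n\ll g(r)$ used for the first-moment heuristic. The third step is Markov's inequality,
\[
\Pr[Z\geq 1] \;\leq\; \mathbb{E}[Z] \;=\; o(1),
\]
so that $\Pr[Z=0]=1-o(1)$; that is, with probability tending to $1$ the random assignment produces pairwise non-isomorphic links, which is precisely link-irregularity. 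I would then verify the regime is non-vacuous: an $r$-regular graph needs $n\geq r+1$, and since $g(r)\geq 2^{\binom{r}{2}}/r!$ grows super-exponentially, $\sqrt{g(r)}$ dwarfs $r+1$ for large $r$, so admissible orders $r+1\le n = o(\sqrt{g(r)})$ exist and the hypothesis ``$g(r)$ large enough'' is exactly what makes this threshold comfortably satisfiable.

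The step I expect to be the genuine obstacle is \emph{realizability}: the idealized model assigns link types independently, whereas in an actual $r$-regular graph the links are highly dependent (if $u\leftrightarrow v$ then $u\in L(v)$ and $v\in L(u)$, forcing structural overlaps), and not every assignment of types corresponds to a realizable graph. To upgrade the conclusion from heuristic to rigorous one would replace the independent-uniform model by the induced distribution on link types in a genuine random $r$-regular graph---for instance via the configuration model or an edge-switching argument---and show that the collision probability for a fixed pair is still $O(1/g(r))$ up to lower-order corrections, after which the same first-moment bound closes the argument. This dependency issue is why the statement is phrased as ``very likely'' rather than as an unconditional existence theorem; the unconditional existence itself is secured separately by the explicit $7$-regular link-irregular graph on $12$ vertices provided in this paper.
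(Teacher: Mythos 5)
Your proposal is correct within the idealized random-assignment model and takes a genuinely different route from the paper. The paper applies the second-moment method to $X$, the number of vertices whose assigned link is unique: it computes $\mathbb{E}[X] = n e^{-(n-1)/g(r)}$, estimates $\operatorname{Var}(X)$, shows $\operatorname{Var}(X)/(\mathbb{E}[X])^2 \to 0$, and invokes Chebyshev (Theorem~\ref{var2}) to conclude $\Pr(X>0) \to 1$, reading this as saying that an assignment with all links distinct is likely. You instead run a first-moment/Markov argument on the collision count $Z$, using $\mathbb{E}[Z] = \binom{n}{2}/g(r)$ and $\Pr[Z\geq 1]\leq \mathbb{E}[Z]$. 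Your route buys two things. First, it is simpler: linearity of expectation plus Markov replaces a variance estimate plus Chebyshev. Second, and more importantly, it repairs a logical weakness in the paper's argument that you correctly identify: $X>0$ means only that \emph{some} vertex has a unique link, and even the concentration $X \sim \mathbb{E}[X] = n(1-o(1))$ still permits $o(n)$ colliding vertices, whereas link-irregularity requires $X=n$ exactly; so the paper's final inference from $\Pr(X>0)\to 1$ to ``all vertices receive distinct links'' does not follow as stated, while your $\Pr(Z=0)\to 1$ is exactly equivalent to link-irregularity of the assignment. The price is the stronger, birthday-type hypothesis $n = o\bigl(\sqrt{g(r)}\bigr)$ in place of the paper's $n \ll g(r)$, which you rightly verify is non-vacuous since $g(r)\geq 2^{\binom{r}{2}}/r!$ grows super-exponentially. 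Both arguments share the same unproved heuristic step---independent uniform link types neither capture the dependencies among links in a genuine $r$-regular graph nor guarantee that an assignment is realizable by one---which the paper adopts tacitly and you flag explicitly, together with a plausible path (configuration model or edge switchings) toward making it rigorous; appropriately, both treatments fall back on the explicit $7$-regular example on $12$ vertices for the unconditional existence claim.
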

	\begin{proof}
		We use the same probabilistic model and random variable $X$ as in the proof of Theorem~\ref{first-moment}, where we heuristically assume that each of the $n$ vertices is assigned a link graph chosen uniformly at random from the $g(r)$ possible $r$-regular link types. Under the assumption that $n \ll g(r)$, we approximate $\Pr(I_v = 1) \approx \frac{1}{g(r)}$.
		
		Then for any vertex $v$, the probability that its assigned link is unique is:
		\[
		E[I_v] = \left(1 - \frac{1}{g(r)}\right)^{n-1} \approx e^{-\frac{n-1}{g(r)}} \quad \text{for large } g(r).
		\]
		Thus,
		\[
		E[X] = n e^{-\frac{n-1}{g(r)}}.
		\]
		
		Next, we estimate the variance:
		\[
		E[X^2] = \sum_{v} E[I_v^2] + \sum_{u \neq v} E[I_u I_v].
		\]
		Since $I_v^2 = I_v$, the first sum equals $E[X]$. The second term,  Under the assumption that $n \ll g(r)$, we approximate
		\[
		E[I_u I_v] \approx \left(1 - \frac{2}{g(r)}\right)^{n-2} \approx e^{-\frac{2(n-2)}{g(r)}}.
		\]
		So,
		\[
		\operatorname{Var}(X) = E[X^2] - (E[X])^2 \approx n e^{-\frac{n-1}{g(r)}} + n(n-1) e^{-\frac{2(n-2)}{g(r)}} - \left(n e^{-\frac{n-1}{g(r)}}\right)^2.
		\]
		
		Now consider:
		\[
		\frac{\operatorname{Var}(X)}{(E[X])^2} \approx \frac{1}{n} \left(e^{\frac{n-1}{g(r)}} - 1\right) \to 0 \quad \text{as } n \to \infty, \text{ assuming } n \ll g(r).
		\]
		
		By Chebyshev’s inequality (second moment method), this implies:
		\[
		\Pr(X > 0) \to 1 \quad \text{as } n \to \infty,
		\]
		which means that with high probability, there exists at least one assignment of links in which all vertices receive distinct links. This completes the proof.

	\end{proof}
	
	In the following, we provide an explicit counterexample for Conjecture \ref{conjecture}. 
	\begin{theorem} There exists a 7-regular link-irregular graph on 12 vertices. 
	\end{theorem}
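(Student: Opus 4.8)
The plan is entirely constructive: exhibit one specific $7$-regular graph $G$ on the vertex set $\{0,1,\dots,11\}$ (given by an adjacency matrix or edge list), and then verify directly that the twelve links $L(0),\dots,L(11)$ are pairwise non-isomorphic. Since every vertex has degree $7$, each link is a graph on exactly $7$ vertices, and there are $g(7)=1044$ pairwise non-isomorphic graphs on $7$ vertices; with only $12$ links to separate, there is enormous slack, so such a graph should exist and need only be located. This is fully consistent with the heuristics of Theorems~\ref{first-moment} and~\ref{EX}, which predict that distinct links are overwhelmingly likely when $n=12 \ll g(7)=1044$.

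To produce a candidate, I would start from a highly symmetric $7$-regular graph and then deliberately break its symmetry. A convenient seed is the circulant on $\mathbb{Z}_{12}$ with connection set $\{\pm1,\pm2,\pm3,6\}$, which is $7$-regular because $6\equiv-6$ contributes degree $1$ while $\pm1,\pm2,\pm3$ contribute degree $6$. However, being vertex-transitive, this circulant has all twelve links isomorphic — exactly the opposite of what is wanted. The construction then proceeds by applying a short sequence of degree-preserving $2$-switches (replacing a pair of edges $ab,cd$ by $ad,cb$), each chosen to perturb the local structure around a few vertices, until no two vertices share the same link.

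The verification step is where the real content lies, and I would make it checkable by hand through a single separating invariant. For each vertex $v$ I would compute the degree sequence of $L(v)$, that is, the sorted list $\bigl(d_{L(v)}(u)\bigr)_{u\in N(v)}$ recording, for each neighbor $u$ of $v$, how many other neighbors of $v$ are adjacent to $u$ inside the link. If these twelve degree sequences are pairwise distinct, then the twelve links are pairwise non-isomorphic and $G$ is link-irregular, finishing the proof. As a safeguard against arithmetic slips I would use the identity $\sum_{v} e(L(v)) = 3\,t(G)$, where $e(L(v))=\tfrac12\sum_{u\in N(v)}d_{L(v)}(u)$ and $t(G)$ is the number of triangles of $G$; this simultaneously confirms each link's edge count.

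The main obstacle is not the verification itself but the possible coincidence of invariants: two non-isomorphic graphs on $7$ vertices can share a degree sequence, so there is no a priori guarantee that degree sequences alone separate all twelve links. If some pair $L(u),L(v)$ has equal degree sequences, I would fall back on a finer invariant — the number of triangles inside the link, or the connected-component structure of the link — or, in the worst case, exhibit an explicit structural obstruction distinguishing the two. The construction should therefore be tuned so that one easily tabulated invariant accomplishes all twelve separations at once, reducing the proof to a short table that the reader can confirm directly from the adjacency data.
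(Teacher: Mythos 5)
Your proposal is a plan for a proof, not a proof: the statement is an existence claim, and you never actually exhibit the witness. You describe a reasonable search strategy (start from the circulant on $\mathbb{Z}_{12}$ with connection set $\{\pm1,\pm2,\pm3,6\}$, apply $2$-switches, then tabulate link degree sequences), but the adjacency data of the final graph is never written down, and consequently none of the twelve links is ever computed or compared. Phrases like ``I would compute'' and ``if some pair has equal degree sequences, I would fall back on a finer invariant'' leave the entire verification---which you yourself identify as ``where the real content lies''---uncompleted. The heuristic that $12 \ll g(7) = 1044$ makes success plausible is exactly that, a heuristic (the paper treats its analogous first- and second-moment arguments as suggestive, not as proof, which is why it supplies an explicit counterexample); symmetric seeds plus a few switches can easily leave residual pairs of isomorphic links, and nothing in your argument rules this out.

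For comparison, the paper's proof takes precisely the approach you outline, but carries it to completion: it gives an explicit edge list for a $7$-regular graph on $12$ vertices, lists the degree multiset of each of the twelve links, observes that these separate all pairs except $L(0)$ and $L(6)$, and then distinguishes that last pair by inspecting adjacency structure. Note that this confirms your anticipated obstacle is real---degree sequences alone do \emph{not} suffice for the paper's example---so your fallback to a finer invariant is necessary, not optional. To repair your proof you must (i) fix one concrete graph, (ii) present the twelve link invariants explicitly, and (iii) resolve any collisions with a concrete structural distinction; until then the existence claim remains unestablished.
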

	\begin{proof}
		Consider the following graph $G$ which is a 7-regular graph on 12 vertices. 
		%\begin{figure}[H]
		%    \centering
		%    \includegraphics[scale=0.8]{}
		%    \caption{The graph \( G \), a 7-regular link-irregular graph on 12 %vertices.}
		%    \label{fig:link-irregular}
		%\end{figure}
		
		\begin{figure}[htbp]
			\centering
			\includegraphics[width=0.45\textwidth]{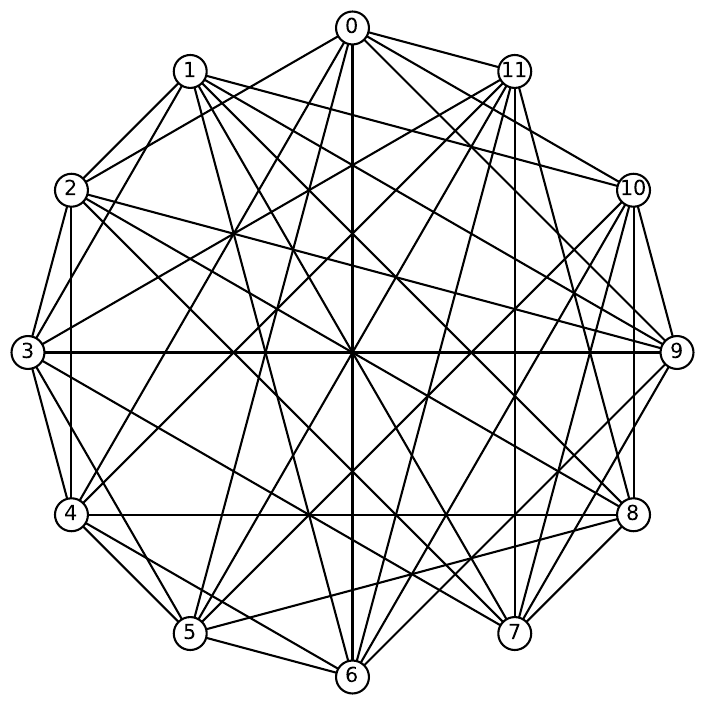}
			\caption{The graph \( G \), a 7-regular link-irregular graph on 12 vertices.}
			\label{fig:link-irregular}
		\end{figure}

		The links of the vertices are shown in Figure \ref{fig:links}.
		%\begin{figure}[H]
		%    \centering
		%    \includegraphics[scale=0.6]{}
		%    \caption{All links of the vertices of the graph $G$.}
		%    \label{fig:links}
		%\end{figure}
		
		\begin{figure}[htbp]
			\centering
			\includegraphics[width=0.7\textwidth]{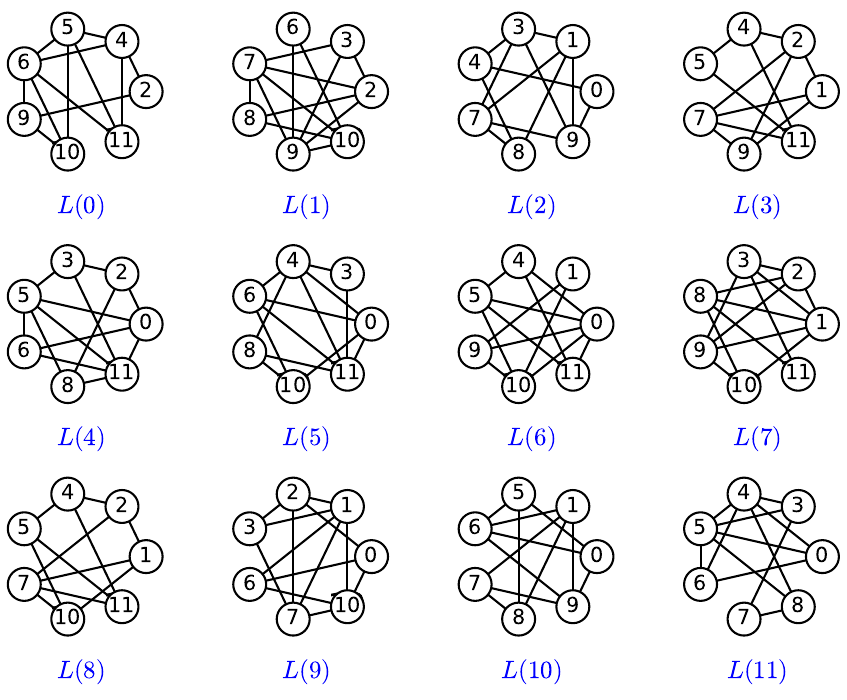}
			\caption{All links of the vertices of the graph $G$.}
			\label{fig:links}
		\end{figure}

		To verify that these links are non-isomorphic, we check their degree sets:
		\begin{center}
			\begin{tabular}{c    c}
				$L(0):  \{2,3,3,3,4,4,5\}$  \ \   &  \ \  $L(6):  \{2,3,3,3,4,4,5\}$ \\
				$L(1):  \{2,3,3,4,4,4,5\}$ \ \    &  \ \  $L(7):  \{2,3,4,4,4,4,5\}$  \\
				$L(2):  \{2,3,3,4,4,4,4\}$  \ \  & \ \  $L(8):  \{3,3,3,3,3,3,4\}$ \\
				$L(3):  \{2,3,3,3,3,4,4\}$ \ \  &  \ \   $L(9):  \{3,3,3,4,4,4,5\}$  \ \\
				$L(4):  \{3,3,3,3,4,5,5\}$  \ \  &  \ \   $L(10): \{3,3,3,3,4,4,4\}$  \\
				$L(5):  \{2,3,3,4,4,5,5\}$   \ \ &  \ \   $L(11): \{2,3,3,3,3,5,5\}$
			\end{tabular}

		\end{center}
		Among all vertex links, only the degree sets of $L(0)$ and $L(6)$ are identical; however, a closer inspection of their adjacency structures shows that they are non-isomorphic. Therefore, $G$ is a 7-regular link-irregular graph on 12 vertices.
	\end{proof}
	
	Note that the edge set of the counterexample is \\
	$\{ (3, 4), (3, 7), (3, 1), (3, 9), (3, 5), (3, 11), (3, 2), 
	(4, 6), (4, 5), (4, 8), (4, 11), (4, 2), (4, 0), 
	(7, 10), \\ (7, 2), (7, 9), (7, 1), (7, 11), (7, 8), 
	(6, 1), (6, 11), (6, 5), (6, 10), (6, 0), (6, 9), 
	(0, 2), (0, 5),  (0, 11),\\ (0, 10), (0, 9), 
	(2, 8), (2, 1), (2, 9), (5, 10), (5, 11), (5, 8),
	(10, 9), (10, 8), (10, 1), (1, 9), (1, 8), (11, 8) \}.$
	\ \\

	The graph in Figure~\ref{fig:link-irregular} was discovered using a SageMath/Python script that searches small regular graphs and checks for link-irregularity by testing non-isomorphism of all vertex links. The pseudocode is given below, and the full implementation can be found in our GitHub repository\footnote{\url{https://github.com/omidkhormali/link-irregular-graphs}}. Readers can use the provided SageMath code to find and identify other regular link-irregular graphs. 
	
	\begin{algorithm}[H]
		\caption{Search for $r$-regular link-irregular graphs}
		\begin{algorithmic}[1]
			\FOR{each $r$-regular graph $G$ of order $n$}
			\STATE Compute all links $L(v)$ for $v \in V(G)$
			\IF{all $L(u) \not\cong L(v)$ for $u \ne v$}
			\STATE \textbf{return} $G$ as a link-irregular graph
			\ENDIF
			\ENDFOR
		\end{algorithmic}
	\end{algorithm}
	
	We now examine bounds on the number of vertices in regular link-irregular graphs. We begin with a lemma.
	
	\begin{lemma}\label{high-degree}
		Any link-irregular graph with $n$ vertices has at most $\frac{n}{2}$ vertices of degree $n-2$.
	\end{lemma}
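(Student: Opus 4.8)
The plan is to exploit the fact that a vertex $v$ of degree $n-2$ misses exactly one other vertex. Write $S = \{v \in V(G) : d(v) = n-2\}$, and for $v \in S$ let $\phi(v)$ denote the unique vertex with $v \not\leftrightarrow \phi(v)$ (necessarily $\phi(v) \neq v$). Then $N(v) = V(G) \setminus \{v, \phi(v)\}$, so the link is $L(v) = G[V(G) \setminus \{v, \phi(v)\}]$. I would prove the bound by showing that $\phi$ restricts to an injection from $S$ into $V(G) \setminus S$; this immediately gives $|S| \le |V(G) \setminus S| = n - |S|$, hence $|S| \le n/2$.

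Two claims are needed. First, $\phi(v) \notin S$ for every $v \in S$. Indeed, if $w := \phi(v)$ also had degree $n-2$, then $w$ would miss exactly one vertex; since $w \not\leftrightarrow v$, that vertex must be $v$, i.e.\ $\phi(w) = v$. But then $N(v) = V(G)\setminus\{v,w\} = N(w)$, so $L(v)$ and $L(w)$ are literally the same induced subgraph, contradicting link-irregularity. Second, $\phi$ is injective on $S$: if distinct $u, v \in S$ satisfied $\phi(u) = \phi(v) = w$, then $N(u) = V(G)\setminus\{u,w\}$ and $N(v) = V(G)\setminus\{v,w\}$, and I would exhibit an explicit isomorphism $L(u) \cong L(v)$, again contradicting link-irregularity.

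The crux, and the step I expect to be the main obstacle, is constructing this isomorphism in the second claim. The natural candidate is the map $\psi$ that swaps $u$ and $v$ and fixes every other vertex; concretely $\psi: V(G)\setminus\{u,w\} \to V(G)\setminus\{v,w\}$ with $\psi(v) = u$ and $\psi(x) = x$ otherwise. To verify $\psi$ preserves adjacency it suffices to check that $u$ and $v$ have identical adjacency to every vertex of $V(G)\setminus\{u,v,w\}$, which holds because both $u$ and $v$ have degree $n-2$ with common non-neighbor $w$, hence are adjacent to all of $V(G)\setminus\{u,v,w\}$. The adjacency between $u$ and $v$, and between either of them and $w$, never enters the comparison, since $w$ is excluded from both links while $u \notin L(u)$ and $v \notin L(v)$. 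Checking these cases is routine but must be done to confirm $\psi$ is well defined and edge-preserving.

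Once both claims hold, $\phi|_S$ is an injection into $V(G)\setminus S$ and the counting bound $|S| \le n/2$ follows. As a sanity check, passing to the complement $\overline{G}$ the set $S$ is exactly the set of leaves of $\overline{G}$, and the two claims say that no two leaves are adjacent and no two leaves share a neighbour; thus the $|S|$ leaves inject into the non-leaf vertices via their unique neighbour, forcing at least $|S|$ non-leaves and so $n \ge 2|S|$.
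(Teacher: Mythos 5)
Your proof is correct and follows essentially the same route as the paper: the paper also shows that any two degree-$(n-2)$ vertices must be adjacent (your Claim 1, phrased contrapositively) and that their unique non-neighbors must be distinct (your Claim 2), then counts the $2k$ distinct vertices to get $k \le n/2$, which is your injection argument in disguise. If anything, your treatment of Claim 2 is more careful than the paper's, which asserts that $u$ and $v$ would have ``the same neighborhoods'' when $u'=v'$ (not literally true, since $v \in N(u)$ but $v \notin N(v)$), whereas you correctly supply the swap isomorphism $\psi$ that the contradiction actually requires.
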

	\begin{proof}
		Let $G$ be a link-irregular graph with $n$ vertices, and suppose $u$ and $v$ are vertices of degree $n-2$ in $G$. If $u$ and $v$ are not adjacent, then they must have the same neighborhoods. But this contradicts the link-irregularity of $G$. Hence, $u \leftrightarrow v$ and we must have vertices $u'$ and $v'$ such that $u'$ and $u$ are nonadjacent, and likewise $v'$ and $v$ are nonadjacent. If $u' = v'$, then $u$ and $v$ have the same neighborhoods which this contradicts the link-irregularity of $G$. Then, we assume that $u' \neq v'$.\\
		Suppose we have vertices $v_1, \cdots, v_k$ in $G$ of degree $n-2$. As above, we also have vertices $v_1', \cdots, v_k'$ such that $v_i \nleftrightarrow v_i'$ and $v_i \leftrightarrow v_j$ if $i \neq j$. Since $v_i$ and $v_j$ must be adjacent for any $i$ and $j$, we have $v_i' \notin \{v_1, \cdots v_k\}$ for all $i$. Because $v_i' \neq v_j'$ for $i \neq j$, this means that $|\{v_1, \cdots, v_k, v_1', \cdots, v_k'\}| = 2k \leq n$. Hence $k \leq n/2$.
	\end{proof}
	
	Using this Lemma, we can find the following result on the number of vertices of regular link-irregular graphs.
	
	\begin{theorem}\label{9-no-regular}
		For $n \leq 9$, there are no regular link-irregular graphs on $n$-vertices.
	\end{theorem}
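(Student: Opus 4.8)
The plan is to reduce the claim to a short, explicit case analysis on the regularity $r$. First I would observe that by Theorem~\ref{akbar1} no link-irregular graph exists for $n \le 5$, so it suffices to treat $6 \le n \le 9$. For an $r$-regular graph on $n$ vertices we need $r < n$ and, by the degree-sum formula, $nr$ even. By Theorem~\ref{no-regular} the values $r \in \{0,1,2,3,4\}$ cannot occur, so only $r \ge 5$ remains. Listing the pairs $(n,r)$ with $6 \le n \le 9$, $5 \le r < n$, and $nr$ even leaves only finitely many candidates to examine.

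I would then clear the easy subcases by two general observations. If $r = n-1$ then $G = K_n$ and every link equals $K_{n-1}$, so all links are isomorphic and $G$ is not link-irregular. If $r = n-2$ then all $n$ vertices have degree $n-2$; since $n > n/2$, this violates Lemma~\ref{high-degree}, which permits at most $n/2$ vertices of degree $n-2$. Applying these together with the parity constraint removes every pair except $(n,r) = (8,5)$ and $(n,r) = (9,6)$.

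The heart of the argument is these two surviving cases, which I would treat through the complement. For an $r$-regular graph $G$ on $n$ vertices, $\overline{G}$ is $(n-1-r)$-regular, and in both cases $n-1-r = 2$; thus $\overline{G}$ is a disjoint union of cycles covering all vertices. Up to isomorphism there are only a handful of these: for $n = 8$ the graphs $C_8$, $C_5 \cup C_3$, $C_4 \cup C_4$, and for $n = 9$ the graphs $C_9$, $C_6 \cup C_3$, $C_5 \cup C_4$, $C_3 \cup C_3 \cup C_3$. The decisive fact is that $\mathrm{Aut}(G) = \mathrm{Aut}(\overline{G})$ and that whenever two vertices $u,v$ share an orbit of $\mathrm{Aut}(G)$, an automorphism carrying $u$ to $v$ restricts to an isomorphism $L(u) \cong L(v)$, so $G$ is not link-irregular. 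Each union of cycles listed has an automorphism orbit of size at least two, hence so does its complement, and no candidate can be link-irregular.

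The main obstacle is ensuring the enumeration in the final step is genuinely complete and that each orbit computation is correct: I would need to confirm that the listed graphs are all the $2$-regular graphs on $8$ and $9$ vertices, and verify the orbit structure of each (vertex-transitivity for $C_8$, $C_9$, $C_4 \cup C_4$ and $C_3 \cup C_3 \cup C_3$; two nontrivial orbits for the mixed-length unions). Once this is in place every candidate is eliminated, establishing the theorem for all $n \le 9$.
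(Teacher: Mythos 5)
Your proposal is correct and follows essentially the same route as the paper: eliminate small $r$ via Theorem~\ref{no-regular}, rule out $r=n-1$ and $r=n-2$ via $K_n$ and Lemma~\ref{high-degree}, and dispose of the surviving cases $(8,5)$ and $(9,6)$ by observing that the complement is $2$-regular, i.e.\ a disjoint union of cycles, whose rotational automorphisms force two vertices to have isomorphic links. Your write-up is in fact somewhat more explicit than the paper's (which compresses the whole orbit argument into the phrase ``rotating a component''), but the underlying idea is identical.
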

	
	\begin{proof}
		Let $G$ be a link-irregular graph on $n$ vertices. We consider the following cases. \\
		For $n \leq 6$, the result holds since there are no link-irregular graphs with fewer than 6 vertices by Theorem \ref{akbar1}, and there is only one unique link-irregular graph on 6 vertices (Figure \ref{fig:unique-6}), which is not regular. \\
		For $n = 7$, by Theorem \ref{no-regular}, $G$ cannot be 1, 2, 3, or 4-regular. By Lemma \ref{high-degree}, $G$ cannot be 5-regular either. Also, it is clear that $G$ cannot be 6-regular since that would imply $G$ is a $K_7$, which contradicts the link-irregularity of $G$.  \\
		For $n = 8$, as before, $G$ cannot be 1, 2, 3, 4, 6, or 7-regular. If $G$ is regular, it must be 5-regular. But in this case, $\bar G$ is 2-regular and has a non-trivial automorphism by rotating a component; but this is impossible since it would imply a non-trivial automorphism of $G$, so $G$ is not regular. \\
		For $n = 9$, $G$ cannot be 1, 2, 3, 4, 7, or 8-regular by the same arguments used before. Also, $G$ cannot be 5-regular either since $n$ is odd. The only possibility is that $G$ is 6-regular. But then in this case, $\bar G$ is 2-regular and has a non-trivial automorphism, which is impossible.
	\end{proof}
	
	Based on our observations in smaller graphs, we propose the following conjecture.
	\begin{conj}
		There exists a regular link-irregular graph on n vertices if and only if $n \geq 12$.
	\end{conj}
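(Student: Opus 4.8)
The statement is a biconditional, so the plan is to treat the two directions separately. For \emph{necessity} --- that no regular link-irregular graph exists when $n \le 11$ --- the engine is the observation that every link-irregular graph is \emph{asymmetric}: if $\phi$ were a non-trivial automorphism with $\phi(u) = v \neq u$, then $\phi$ would carry $L(u)$ isomorphically onto $L(v)$, forcing $L(u) \cong L(v)$ and contradicting link-irregularity. I would first record this as a lemma, since it is exactly what powers the complement arguments already used for $n \le 9$.

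Combining this with the results in hand pins the admissible regular degree $r$ into a narrow window. Theorem~\ref{no-regular} gives $r \ge 5$. At the top end, $r = n-1$ forces $G = K_n$ (all links equal $K_{n-1}$); $r = n-2$ is excluded by Lemma~\ref{high-degree}, since then all $n$ vertices would have degree $n-2$ while at most $n/2$ may; and $r = n-3$ makes $\bar G$ a $2$-regular graph, i.e.\ a disjoint union of cycles, which always admits a non-trivial automorphism and hence so does $G$ --- impossible by asymmetry. This yields $5 \le r \le n-4$, so $n \ge 9$, and reproves Theorem~\ref{9-no-regular}. For $n = 10$ and $n = 11$ a parity check (the degree sum $nr$ must be even) collapses the possibilities to $r \in \{5,6\}$ for $n=10$ and $r = 6$ for $n = 11$, whose complements are $3$- or $4$-regular. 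Here lies the first genuine difficulty: for $3$- and $4$-regular complements the cheap ``the complement has a symmetry'' shortcut no longer works, so I expect these residual cases to require either a finer structural analysis of the few surviving degree sequences or an exhaustive check over the finitely many regular graphs of those orders (as was done computationally for the $n=12$ example). I would attempt the structural route first, ruling out each case by showing the forced multiset of links cannot be realized without a repetition, and fall back on the search otherwise.

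The \emph{sufficiency} direction --- exhibiting a regular link-irregular graph for every $n \ge 12$ --- is where I expect the main obstacle to be. The base case $n = 12$ is already in hand, and a computer search should dispatch a finite initial segment, but a uniform construction covering all $n$ (respecting the parity constraint that $nr$ be even) is the crux. The natural strategy is to make the second-moment heuristic of Theorem~\ref{EX} rigorous on an actual random $r$-regular graph rather than on independently assigned link types: for a suitably slowly growing $r = r(n)$ with $n \ll g(r)$, one would show that the expected number of repeated-link pairs is $o(1)$, so a regular link-irregular graph exists with high probability.

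The hard part is that links in a genuine random regular graph are strongly dependent, since overlapping neighborhoods share edges, so the clean independence assumption behind the displayed variance computation must be replaced by an honest estimate of the probability that two prescribed induced neighborhoods are isomorphic. This contiguity and second-moment analysis of the configuration model, together with patching the finitely many residual small orders by explicit construction (for instance circulant-type graphs with a deliberately broken symmetry), is the real content that the present conjecture leaves open; I regard the rigorous infinite family, not the small-case nonexistence, as the decisive obstacle.
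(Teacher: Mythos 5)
The statement you are addressing is posed in the paper as a conjecture, not a theorem: the paper proves only the two fragments you build on (non-existence of regular link-irregular graphs for $n \le 9$, Theorem~\ref{9-no-regular}, and the explicit $7$-regular example on $12$ vertices), and explicitly leaves the rest open. So there is no paper proof to compare against; your proposal must stand on its own, and it does not close the conjecture. Your preparatory material is sound and in the spirit of the paper's arguments: the asymmetry lemma (a non-trivial automorphism carrying $u$ to $v$ restricts to an isomorphism $L(u) \cong L(v)$) is exactly what the paper implicitly invokes when it kills the $5$-regular case on $8$ vertices and the $6$-regular case on $9$ vertices via the $2$-regular complement, and your window $5 \le r \le n-4$ together with the parity of $nr$ correctly reduces the ``only if'' direction to the cases $(n,r) \in \{(10,5),(10,6),(11,6)\}$.

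However, both steps you flag as ``difficulties'' are genuine unfilled gaps, and they are the entire content of the conjecture. For $n = 10, 11$ you offer a choice between a structural analysis you have not carried out and an exhaustive search you have not run; neither is in hand, and the structural route is unlikely to be cheap, since $3$- and $4$-regular graphs on $10$ or $11$ vertices can be asymmetric, so the complement-symmetry shortcut genuinely dies there. For the ``if'' direction, exhibiting a regular link-irregular graph for \emph{every} $n \ge 13$ is not addressed beyond a pointer to the paper's second-moment computation (Theorem~\ref{EX}), and you correctly observe that that computation is heuristic rather than rigorous: it treats the links as independent uniform draws from the $g(r)$ isomorphism types, which is false in an actual regular graph, where overlapping neighborhoods are strongly dependent and the link distribution is far from uniform. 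There is also a quantitative constraint your sketch does not resolve: a fixed $r$ caps the order at $n \le g(r)$, so any family covering all $n \ge 12$ must let $r$ grow appropriately with $n$ while respecting the parity of $nr$. In short, your reduction of the small cases is correct and would be a useful addition, but what you have is a research plan whose two open points coincide exactly with why the statement is still a conjecture.
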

	
	\section{Edge Bounds in Link-Irregular Graphs}
	
	We next find bounds for the number of edges in a link-irregular graph.
	\begin{theorem}
		If $G$ is a link-irregular graph with $n$ vertices, then $e(G) \leq \dfrac{2n^2 - 5n + 4}{4}$.
	\end{theorem}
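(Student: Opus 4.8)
The plan is to bound $e(G)$ through the degree sum $2e(G)=\sum_{v\in V(G)} d(v)$, by controlling how many vertices can attain the largest possible degrees. The first step is to show that a link-irregular graph has at most one vertex of degree $n-1$. Indeed, if $u$ and $v$ both had degree $n-1$, they would be adjacent and satisfy $N[u]=N[v]=V(G)$, so the transposition swapping $u$ and $v$ while fixing every other vertex is a graph automorphism of $G$. This automorphism carries $N(u)$ onto $N(v)$, hence induces an isomorphism $L(u)\cong L(v)$, contradicting link-irregularity. So at most one vertex has degree $n-1$.

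The second step is to invoke Lemma~\ref{high-degree}, which already guarantees that at most $\tfrac{n}{2}$ vertices have degree $n-2$. Every remaining vertex then has degree at most $n-3$. Partitioning $V(G)$ into the (at most one) vertex of degree $n-1$, the (at most $\tfrac{n}{2}$) vertices of degree $n-2$, and the rest of degree at most $n-3$, the degree sum is maximized by taking exactly one vertex of degree $n-1$, exactly $\tfrac{n}{2}$ vertices of degree $n-2$, and the remaining $\tfrac{n}{2}-1$ vertices of degree $n-3$. This yields
\[
2e(G)\;\le\;(n-1)+\frac{n}{2}(n-2)+\left(\frac{n}{2}-1\right)(n-3)\;=\;\frac{2n^{2}-5n+4}{2},
\]
so that $e(G)\le \dfrac{2n^{2}-5n+4}{4}$, as claimed.

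Finally I would note that the remaining configurations only decrease the sum and hence respect the bound: dropping the full-degree vertex gives a strictly smaller total, and when $n$ is odd Lemma~\ref{high-degree} forces at most $\tfrac{n-1}{2}$ vertices of degree $n-2$, which again lowers the estimate. The main obstacle is really the first step — arguing that two vertices of degree $n-1$ are forced to have isomorphic links; once that automorphism argument is in place and combined with Lemma~\ref{high-degree}, the rest is a short and routine degree-sum optimization. (It is worth observing that the one-vertex-of-degree-$(n-1)$ bound is essential: allowing two such vertices would inflate the right-hand side and break the stated inequality, so this step cannot be omitted.)
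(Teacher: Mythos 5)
Your proof is correct and takes essentially the same route as the paper: bound the high-degree vertices (at most one of degree $n-1$, at most $\tfrac{n}{2}$ of degree $n-2$ via Lemma~\ref{high-degree}, the rest at most $n-3$) and then apply the degree-sum formula. Your explicit automorphism argument for the degree-$(n-1)$ step is simply a more careful justification of the paper's one-line observation that two such vertices would have isomorphic links.
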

	
	\begin{proof}
		Any two vertices of degree $n-1$ have the same neighborhood. By this and Lemma \ref{high-degree}, we have at most one vertex of degree $n-1$ and at most $n/2$ vertices of degree $n-2$. The rest have degree at most $n-3$. Then
		\[
		e(G) = \frac{1}{2}\sum_{v \in V(G)} d(v) \leq \frac{1}{2}[ (n-1) + \frac{n}{2}(n-2) +(n-(\frac{n}{2}-1))(n-3)] = \frac{2n^2 -5n +4}{4}.
		\]
	\end{proof}
	
	We now establish a lower bound. We note that there are 1, 2, 4 isomorphism classes of graphs on 1, 2, and 3 vertices, respectively.
	\begin{theorem}
		If $G$ is a link-irregular graph with $n$ vertices, then $e(G) \geq 2n - 5$.
	\end{theorem}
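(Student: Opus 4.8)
The plan is to translate link-irregularity into a per-degree counting bound and then minimize the degree sum. The central observation is this: if two vertices $u,v$ share the same degree $d$, then $L(u)$ and $L(v)$ are both graphs on exactly $d$ vertices, and since link-irregularity forces $L(u)\ncong L(v)$, the vertices of degree $d$ must carry pairwise non-isomorphic links on $d$ vertices. Hence the number of degree-$d$ vertices is at most $g(d)$, the number of isomorphism classes of graphs on $d$ vertices. This is exactly where the counts $g(1)=1$, $g(2)=2$, $g(3)=4$ noted before the statement enter: a link-irregular graph has at most one vertex of degree $1$, at most two of degree $2$, and at most four of degree $3$, so at most $1+2+4=7$ vertices of degree at most $3$ (the four degree-$3$ link types being the empty graph, a single edge, $P_3$, and the triangle).

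Next I would convert this into an edge bound through the degree-sum formula. Writing $\sum_{v} d(v) = 4n - \sum_{v}(4-d(v))$, every vertex of degree at least $4$ contributes a nonpositive amount to $\sum_v(4-d(v))$, so only the low-degree vertices can increase it. Its maximum possible value is therefore $3\cdot 1 + 2\cdot 2 + 1\cdot 4 = 11$, attained when the degree-$1$, degree-$2$, and degree-$3$ classes are as full as the $g(d)$ bounds allow. This gives $\sum_v d(v) \ge 4n-11$, whence $e(G)=\frac{1}{2}\sum_v d(v) \ge \frac{4n-11}{2}=2n-5.5$. Because $4n-11$ is odd, this fraction is never an integer, so the integrality of $e(G)$ upgrades the bound to $e(G)\ge \lceil 2n-5.5\rceil = 2n-5$; the use of integrality here is essential rather than cosmetic.

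The step I expect to require the most care is the treatment of degree-$0$ (isolated) vertices, and I would flag it as the main obstacle. At most one isolated vertex is allowed, since two would share the null link; but an isolated vertex contributes $4$ rather than $0$ to $\sum_v(4-d(v))$, which would inflate the maximum from $11$ to $15$ and weaken the conclusion to $e(G)\ge 2n-7$. Moreover this case is not vacuous: appending an isolated vertex to any smaller link-irregular graph yields another link-irregular graph, so such vertices genuinely occur. The clean way to obtain $2n-5$ is therefore to argue the bound for link-irregular graphs of minimum degree at least $1$ (equivalently, after removing the at most one isolated vertex). Verifying that the extremal configuration of seven low-degree vertices is simultaneously realizable, and deciding whether the statement should carry the no-isolated-vertex hypothesis explicitly, are the points I would scrutinize before finalizing the argument.
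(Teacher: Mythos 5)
Your main argument is precisely the paper's proof: a link-irregular graph has at most $1$, $2$, and $4$ vertices of degrees $1$, $2$, and $3$ (the numbers of isomorphism classes of graphs on $1$, $2$, $3$ vertices), every remaining vertex is assumed to have degree at least $4$, so $\sum_v d(v) \ge 1 + 2(2) + 3(4) + 4(n-7) = 4n-11$, and integrality of $e(G)$ upgrades $e(G)\ge \frac{4n-11}{2}$ to $e(G)\ge 2n-5$ (the paper treats $n=6$ separately via the unique $6$-vertex link-irregular graph). On graphs of minimum degree at least $1$, your write-up and the paper's proof coincide, down to the essential use of parity/integrality. (One small remark: realizability of the extremal low-degree configuration, which you planned to scrutinize, is irrelevant for a lower bound.)

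Where you go beyond the paper is the isolated-vertex issue, and there your diagnosis is better than your proposed cure. The concern is genuine, and it is a gap in the paper's own proof as well: the step ``the rest have degree at least $4$'' ignores degree $0$, yet link-irregular graphs with an isolated vertex exist (the unique $6$-vertex link-irregular graph has no isolated vertex, so adjoining one gives a $7$-vertex example). However, your repair is not an equivalence: deleting the isolated vertex leaves $n-1$ vertices, and the minimum-degree-$1$ bound then yields only $e(G)\ge 2(n-1)-5 = 2n-7$. Worse, this loss of $2$ cannot be recovered, because under the paper's literal definition (which assigns an isolated vertex the null link) the statement is actually false. Consider $H$ on $\{p,q,r,x,y,z,w\}$ with the nine edges $px$, $rx$, $ry$, $qz$, $qw$, $yz$, $xw$, $yw$, $zw$. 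Its links are $K_1$ for $p$; $K_2$ for $q$ and $\overline{K_2}$ for $r$; the three graphs on $3$ vertices with $0$, $1$, $2$ edges for $x$, $y$, $z$; and $P_3\cup K_1$ for $w$ --- pairwise non-isomorphic, so $H$ is link-irregular with minimum degree $1$ and $9 = 2\cdot 7-5$ edges (incidentally showing the bound is tight at $n=7$). Adding an isolated vertex to $H$ gives a link-irregular graph on $8$ vertices with $9$ edges, violating $e\ge 2\cdot 8-5=11$. So the theorem is correct only with the explicit hypothesis that $G$ has no isolated vertex; the answer to your closing question is yes, that hypothesis must be carried, and both your argument and the paper's are complete once it is imposed.
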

	
	\begin{proof}
		If $G$ is the unique link-irregular graph with six vertices (shown in Figure \ref{fig:unique-6}), the result follows by counting its edges.\\
		Suppose $n \geq 7$. Then, at most, 1, 2, and 4 vertices have degrees 1, 2, and 3, respectively. The rest have a degree at least 4. From this, we have
		\[
		e(G) = \frac{1}{2} \sum_{v \in V(G)}d(v) \geq \frac{1}{2}[1 + 2(2) + 3(4) + 4(n-7)] = \frac{1}{2} (4n-11).
		\]
		Since $e(G)$ must be an integer. We have $e(G) \geq 2n - 5$.
	\end{proof}
	
	We can asymptotically strengthen this bound without significant additional work.
	
	\begin{theorem}
		Let $f(n)$ denote the minimum number of edges in a link-irregular graph on $n$ vertices. Then $f(n) = \Omega (n\sqrt {\log n})$.
	\end{theorem}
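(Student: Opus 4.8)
The plan is to run a counting argument on the degree sequence, exploiting the fact that a vertex of degree $r$ has a link which is a graph on exactly $r$ vertices, and that all of these links must be pairwise non-isomorphic. Write $n_r$ for the number of vertices of $G$ of degree $r$, and let $g(r)$ denote the number of isomorphism classes of graphs on $r$ vertices, as in Section~3. Any two vertices of the same degree $r$ have links that are non-isomorphic graphs on $r$ vertices, so the degree-$r$ vertices inject into the isomorphism classes of graphs on $r$ vertices; hence $n_r \le g(r)$ for every $r$. The only fact about $g(r)$ I need here is the crude upper bound $g(r) \le 2^{\binom{r}{2}}$, since the number of labeled graphs on $r$ vertices already dominates the number of isomorphism classes. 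Note that this is the opposite inequality to the lower bound on $g(r)$ used earlier: here an \emph{upper} bound on $g$ forces a \emph{lower} bound on the number of edges.

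First I would fix a threshold $R = R(n)$ and bound the number of low-degree vertices. Summing the constraint gives
\[
|\{v : d(v) \le R\}| = \sum_{r=0}^{R} n_r \le \sum_{r=0}^{R} g(r) \le (R+1)\,2^{\binom{R}{2}}.
\]
The goal is to choose $R$ as large as possible while keeping this count below $n/2$. Taking $R = \lfloor \sqrt{\log_2 n}\,\rfloor$ yields $\binom{R}{2} \le \tfrac{1}{2}\log_2 n$, so $2^{\binom{R}{2}} \le \sqrt{n}$ and therefore $(R+1)2^{\binom{R}{2}} \le (R+1)\sqrt{n}$, which is at most $n/2$ for all sufficiently large $n$ because $R+1 = O(\sqrt{\log n}) = o(\sqrt{n})$.

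Consequently, at least $n/2$ vertices have degree strictly greater than $R$, and the degree-sum formula gives
\[
2e(G) = \sum_{v \in V(G)} d(v) \ge \sum_{v : d(v) > R} d(v) > R \cdot \frac{n}{2}.
\]
Hence $e(G) \ge \tfrac{1}{4} R\, n = \Omega\!\left(n\sqrt{\log n}\right)$, as claimed; the finitely many small $n$ are absorbed into the implied constant.

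The main obstacle is the calibration in the middle step: I need $R$ large enough to produce the $\sqrt{\log n}$ factor, yet small enough that $\sum_{r \le R} g(r)$ stays below a fixed fraction of $n$. This balance works precisely because $\binom{R}{2}$ grows quadratically in $R$, so $R \asymp \sqrt{\log n}$ is exactly the scale at which $\sum_{r \le R} g(r)$ transitions from $o(n)$ to exceeding $n$; the choice of the constant inside the square root, and the bookkeeping of $\log_2$ versus $\log$, only affect the implied constant. Everything else reduces to routine summation and the degree-sum formula.
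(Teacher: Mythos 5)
Your proof is correct and takes essentially the same approach as the paper: bound the number of degree-$r$ vertices by the number of isomorphism classes $g(r) \le 2^{\binom{r}{2}}$, conclude that most vertices must have degree at least roughly $\sqrt{\log n}$, and finish with the degree-sum formula. The only difference is bookkeeping: you choose the slightly smaller threshold $R = \lfloor\sqrt{\log_2 n}\rfloor$ so that the low-degree vertices number at most $(R+1)\sqrt{n} = o(n)$ and can simply be discarded, whereas the paper thresholds at $k \approx \sqrt{2\log_2 n}$ and must then control the correction term $\sum_{d=1}^{k-1}(k-d)2^{\binom{d}{2}}$ via a geometric series --- your version trades a constant factor inside the $\Omega$ for a cleaner estimate.
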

	
	\begin{proof}
		We note first that an upper bound on the number of isomorphism classes of graphs on $h$ vertices is $2^{h\choose2}$. Hence, any link-irregular graph $G$ has at most $2^{h\choose2}$ vertices of degree $h$.
		Given a number $n$ of vertices, we may pick $k$ such that $2^{k\choose2} \leq n < 2^{{k+1}\choose 2}$. For every value $d$ such that $1 \leq d < k$, a link-irregular graph on $n$ vertices has at most $2^{d\choose2}$ vertices of degree $d$. Such a graph also has at least $n - \sum_{d=1}^{k-1} 2^{d\choose2}$ vertices of degree at least $k$. We have
		\[
		\sum d(v) \geq \sum_{d = 1}^{k-1}d2^{d\choose2} + k(n - \sum_{d=1}^{k-1} 2^{d\choose2}) = kn - \sum_{d = 1}^{k-1}(k-d)2^{d\choose2}.
		\]
		Since $2^{k\choose2} \leq n < 2^{{k+1}\choose 2}$, $\frac{k^2-k}{2} \leq \log n < \frac{k^2+k}{2} $. Hence, as $n \rightarrow \infty$, $k \rightarrow \sqrt{2\log \space n}$. In addition, we have
		\[\sum_{d = 1}^{k-1}(k-d)2^{d\choose2} < k\sum_{d = 1}^{k-1}2^{d\choose2} < k\sum_{i = 1}^{{k-1}\choose2} 2^i = k(\frac{1-2^{{k-1}\choose2}}{1-2}) = k(2^{{k-1}\choose2} - 1).
		\]
		Since $n \geq 2^{k\choose2}$, the term $nk$ dominates $k(2^{{k-1}\choose2} - 1)$ and therefore dominates the term $\sum_{d = 1}^{k-1}(k-d)2^{d\choose2}$. Combining this with the first inequality and the degree-sum formula, we obtain $f(n) = \Omega(\frac{n\sqrt{2 \log n}}{2}) = \Omega(n\sqrt{\log n})$. 
	\end{proof}
	
	%%%%%%%%%%%%%%%%%%%%%%%%%%%%%%%%%%%%%%%%%%%%%%%%%%%%%%%%%%%%%
	\section{Planarity of Link-Irregular Graphs}
	
	The unique link-irregular graph on 6 vertices in Figure \ref{fig:unique-6} is planar, which motivated us to explore the planarity property of link-irregular graphs. Surprisingly, we found that almost all other link-irregular graphs we identified are non-planar, suggesting that planarity is a rare property in this class of graphs. From \cite{west}, we have the following result.
	
	\begin{lemma}[\cite{west}]\label{planar-lemma}
		If G is a simple planar graph with at least three vertices, then $e(G) \leq 3n(G) - 6$.
	\end{lemma}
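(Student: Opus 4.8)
The plan is to derive the bound from Euler's polyhedral formula together with a face-counting argument. First I would fix a planar embedding of $G$ and let $f$ denote the number of faces (regions) of the embedding, including the unbounded outer face. For a connected planar graph, Euler's formula gives $n(G) - e(G) + f = 2$; for a graph with $c$ connected components the correct form is $n(G) - e(G) + f = 1 + c$, and since $c \geq 1$ this only strengthens the final inequality, so it suffices to treat the connected case.

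The key combinatorial input is a lower bound on face-boundary lengths. Since $G$ is simple (no loops, no multiple edges) and has at least three vertices, every face of the embedding is bounded by a closed walk of length at least $3$. Summing these boundary lengths over all faces counts each edge exactly twice, once from each side, so the total of all face-boundary lengths equals $2e(G)$. Combining this with the per-face lower bound of $3$ gives $3f \leq 2e(G)$, hence $f \leq \tfrac{2}{3}e(G)$.

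Finally I would substitute this face bound back into Euler's formula. From $f = 2 - n(G) + e(G)$ and $f \leq \tfrac{2}{3}e(G)$ I obtain $2 - n(G) + e(G) \leq \tfrac{2}{3}e(G)$, which rearranges to $\tfrac{1}{3}e(G) \leq n(G) - 2$, i.e.\ $e(G) \leq 3n(G) - 6$, as required.

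I expect the main obstacle to be the face-length lower bound rather than the algebra. One must argue carefully that in a \emph{simple} graph on at least three vertices no face can be bounded by a walk of length $1$ (which would require a loop) or $2$ (which would require a pair of parallel edges), and that a bridge, which borders only one face, is nevertheless traversed twice in that face's boundary walk and so is still counted twice in the sum. These degenerate cases are exactly where the hypotheses ``simple'' and ``$n(G) \geq 3$'' are needed, so the write-up would state explicitly why they preclude faces of boundary length less than $3$.
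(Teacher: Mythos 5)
Your proof is correct, and it is the standard Euler-formula argument (face boundaries have length at least $3$ in a simple graph on at least three vertices, so $3f \leq 2e(G)$, then substitute into $n(G) - e(G) + f = 2$) — the paper itself does not prove this lemma but cites it from West, where exactly this argument appears. Your handling of the edge cases (disconnected graphs strengthening the bound, bridges traversed twice by a face's boundary walk) is the right level of care for a complete write-up.
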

	
	We use Lemma \ref{planar-lemma} to prove the following theorem.
	\begin{theorem}\label{planar}
		If $G$ is a link-irregular graph with $n > 277$ vertices, then $G$ is not planar.
	\end{theorem}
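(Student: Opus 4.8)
<br>

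The plan is to derive a contradiction from assuming a link-irregular graph $G$ on $n > 277$ vertices is planar. The key observation is that planarity simultaneously constrains $G$ \emph{from above} on its edge count (via Lemma~\ref{planar-lemma}, giving $e(G) \leq 3n - 6$) while link-irregularity forces $G$ to have \emph{many} edges. The contradiction should emerge by showing the lower bound on $e(G)$ exceeds $3n-6$ once $n$ is large enough.

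\textbf{First} I would establish a lower bound on $e(G)$ that is roughly $3n$, which is stronger than the $2n-5$ bound proved earlier and which must beat $3n-6$ for the argument to close. The mechanism is the same degree-counting idea used in the edge-bound theorems: a link-irregular graph can have at most $2^{\binom{d}{2}}$ vertices of each degree $d$, since the link of a degree-$d$ vertex is a graph on $d$ vertices and there are at most $2^{\binom{d}{2}}$ isomorphism types. Concretely, there is at most $1$ vertex of degree $0$ or $1$, at most $2$ of degree $2$, at most $4$ of degree $3$, at most $2^{\binom{4}{2}} = 64$ of degree $4$, and so on. To force an average degree above $6$, I would push all but a bounded number of vertices to have degree at least $7$: summing the caps $1 + 1 + 2 + 4 + 64 + 2^{\binom{5}{2}} + 2^{\binom{6}{2}}$ for degrees $0$ through $6$ gives a fixed constant $C$, so at least $n - C$ vertices have degree at least $7$. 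This yields $\sum_v d(v) \geq 7(n - C) + (\text{contribution of the low-degree vertices})$, hence $e(G) \geq \tfrac{7}{2}(n-C)$ up to lower-order terms.

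\textbf{Second} I would combine this with planarity: $3n - 6 \geq e(G) \geq \tfrac{1}{2}\bigl(7n - R\bigr)$ for an explicit constant $R$ collecting the deficit from the capped low-degree vertices. Rearranging gives $6n - 12 \geq 7n - R$, i.e. $n \leq R - 12$. The whole point of the threshold $277$ is that this bookkeeping constant $R$, once computed exactly from the sum $\sum_{d=0}^{6}(7-d)\,2^{\binom{d}{2}}$ (the total ``shortfall'' each low-degree vertex causes relative to degree $7$), works out so that the inequality is violated precisely when $n > 277$. So the numerical target $277$ is just the value of this explicit finite sum minus a small adjustment, and I would verify it by directly evaluating $\sum_{d=0}^{6}(7-d)2^{\binom{d}{2}}$.

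\textbf{The main obstacle} is getting the constant exactly right rather than merely getting an $\Omega$-style bound. The earlier asymptotic theorem already shows $f(n) = \Omega(n\sqrt{\log n})$, which trivially exceeds $3n$ for large $n$ and would give non-planarity for \emph{some} finite threshold; but pinning down the sharp value $277$ requires carefully accounting for the exact maximum number of low-degree vertices and the exact degree-sum deficit they contribute, and then checking that the integrality of $e(G)$ and the floor/rounding do not shift the boundary. I would therefore be careful to use degree $7$ (not a larger cutoff) as the target level, since raising the cutoff inflates the capped population faster than it helps the average, and to confirm that the resulting threshold matches $277$ exactly; the risk is an off-by-a-few-vertices error in the constant, so I would double-check the arithmetic of $\sum_{d=0}^{6}(7-d)2^{\binom{d}{2}}$ against the claimed $n=277$.
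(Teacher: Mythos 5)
Your overall strategy is exactly the paper's: cap the number of vertices of each small degree by the number of isomorphism types of possible links, conclude that all but a bounded number of vertices have degree at least $7$, and play the resulting lower bound on $e(G)$ against the planarity bound $e(G) \leq 3n-6$ from Lemma~\ref{planar-lemma}. However, there is a genuine gap in how you compute the constant, and it is not an ``off-by-a-few-vertices'' issue: using the crude bound $2^{\binom{d}{2}}$ for degrees $4$, $5$, $6$ gives caps of $64$, $1024$, and $32768$, whereas the actual numbers of isomorphism classes of graphs on $4$, $5$, $6$ vertices are $11$, $34$, and $156$. Your own proposed check exposes the problem: $\sum_{d=0}^{6}(7-d)2^{\binom{d}{2}} = 7+6+10+32+192+2048+32768 = 35063$, so your argument yields non-planarity only for $n$ larger than roughly $35000$, which is a strictly weaker statement than the theorem's threshold of $277$. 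The claim that this sum ``works out'' to match $277$ is false; the term $1 \cdot 2^{\binom{6}{2}} = 32768$ alone dwarfs the target.

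The paper closes this gap by using the exact isomorphism-class counts $1, 2, 4, 11, 34, 156$ for graphs on $1$ through $6$ vertices. Then at most $1+2+4+11+34+156 = 208$ vertices have degree at most $6$, and the degree sum is at least $1 + 2(2) + 3(4) + 4(11) + 5(34) + 6(156) + 7(n-208) = 7n - 289$, so $e(G) \geq \frac{7n-289}{2}$, which exceeds $3n-6$ precisely when $n > 277$. So to repair your proof you must replace the generic bound $2^{\binom{d}{2}}$ by these exact enumeration values for $d = 4, 5, 6$ (just as you already implicitly did for $d \leq 3$, where you wrote $4$ rather than $2^{\binom{3}{2}} = 8$); the structure of your argument then goes through verbatim and produces the stated constant.
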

	
	\begin{proof}
		The number of isomorphism classes of graphs on 1, 2, 3, 4, 5, and 6 vertices are 1, 2, 4, 11, 34, and 156, respectively. Hence, any link irregular graph with more than $1+2+4+11+34+156 = 208$ vertices satisfies the inequality
		\[
		e(G) = \frac{1}{2}\sum_{v\in G} d(v) \geq \frac{1}{2}[1+2(2)+3(4)+4(11)+5(34)+6(156)+7(n-108)] = \frac{7n-289}{2}. 
		\]
		For $n>277$, $\frac{7n-289}{2} > 3n - 6$. Hence, by Lemma \ref{palanar-lemma}, $G$ is not planar.
	\end{proof}
	
	We can conclude the following corollary using Theorem \ref{planar}.
	\begin{cor}
		All but finitely many link-irregular graphs are non-planar.
	\end{cor}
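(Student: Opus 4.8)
The plan is to obtain this corollary as an immediate consequence of Theorem~\ref{planar}, so that essentially all the work has already been done; the remaining task is only to convert the threshold $n>277$ into a finiteness statement. The key realization is that Theorem~\ref{planar} already rules out planarity for every large link-irregular graph, so the only candidates for planar link-irregular graphs live below a fixed bound on the number of vertices.

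First I would take the contrapositive of Theorem~\ref{planar}: if $G$ is a planar link-irregular graph, then necessarily $n(G) \leq 277$. Hence every planar link-irregular graph is a graph on at most $277$ vertices. This single implication isolates all the planar exceptions inside a bounded range of orders.

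Next I would invoke the elementary counting fact that, for each fixed order $m$, there are only finitely many graphs on $m$ vertices --- at most $2^{\binom{m}{2}}$ labeled graphs, and therefore finitely many isomorphism classes. Summing this over $m = 1, 2, \dots, 277$ shows that the set of all graphs on at most $277$ vertices is finite, and in particular its subset consisting of planar link-irregular graphs is finite.

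Finally, combining the two observations gives the result: every link-irregular graph on more than $277$ vertices is non-planar by Theorem~\ref{planar}, while the planar link-irregular graphs are confined to the finite collection of graphs on at most $277$ vertices. Therefore all but finitely many link-irregular graphs are non-planar. I expect no real obstacle here, since the argument is a direct corollary; the only point deserving explicit mention is that a bounded number of vertices yields only finitely many graphs, which is a standard counting observation rather than a difficulty.
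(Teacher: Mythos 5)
Your proposal is correct and matches the paper's intent exactly: the paper derives this corollary directly from Theorem~\ref{planar}, and your argument simply makes explicit the standard counting fact (finitely many graphs, up to isomorphism, on at most $277$ vertices) that the paper leaves implicit. No difference in approach worth noting.
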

	
	The following result is about existence of planar regular link-irregular graphs. But we first mention to two results in \cite{west}; the first is a consequence of Kuratowski's Theorem, and the other is about the triangulation of planar graphs.
	\begin{cor}[Consequence of Kuratowski’s Theorem~\cite{west}]\label{k5-and-k33}
		If a graph \( G \) contains \( K_5 \) or \( K_{3,3} \) as a subgraph, then \( G \) is non-planar.
	\end{cor}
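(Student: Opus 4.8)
The plan is to derive this directly from Kuratowski's characterization of planar graphs, which states that a graph is planar if and only if it contains no subdivision of $K_5$ or $K_{3,3}$. The only observation needed is that every graph is trivially a subdivision of itself, so if $G$ contains a copy of $K_5$ or $K_{3,3}$ as a subgraph, that copy is in particular a subdivision of $K_5$ or $K_{3,3}$ sitting inside $G$. Kuratowski's theorem then forbids $G$ from being planar. Since the statement is explicitly labelled a consequence of Kuratowski's theorem and cited from the literature, this one-line deduction is the natural route for the write-up.

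Alternatively, I would give a self-contained argument resting on two facts. First, planarity is hereditary under the subgraph operation: restricting a plane embedding of $G$ to any subgraph yields a plane embedding of that subgraph, so a non-planar subgraph forces $G$ itself to be non-planar. Second, both $K_5$ and $K_{3,3}$ are themselves non-planar, which I would verify through Euler-formula edge counts. For $K_5$ the bound of Lemma~\ref{planar-lemma} applies immediately, since $e(K_5) = 10 > 9 = 3 \cdot 5 - 6$. Combining these two facts, any graph $G$ containing $K_5$ or $K_{3,3}$ cannot be planar.

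The main obstacle is that Lemma~\ref{planar-lemma} does not by itself rule out $K_{3,3}$: here $n = 6$ and $e = 9$, while $3n - 6 = 12$, so the $3n-6$ inequality is satisfied and yields no contradiction. To handle $K_{3,3}$ I would instead invoke the sharper bound $e \leq 2n - 4$, valid for triangle-free (in particular bipartite) planar graphs on $n \geq 3$ vertices; this follows from the same Euler-formula argument once one notes that every face in a triangle-free plane embedding is bounded by at least four edges. Since $K_{3,3}$ is bipartite with $e = 9 > 8 = 2 \cdot 6 - 4$, it is non-planar, completing the self-contained argument. Because the paper records only the weaker $3n - 6$ bound, the cleanest path in the final text is the first approach, simply citing Kuratowski's theorem and treating the containing subgraph as a (degenerate) subdivision.
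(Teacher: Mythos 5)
Your primary argument is correct and coincides with the paper's treatment: the paper states this corollary without proof, citing it from West as a consequence of Kuratowski's theorem, and your one-line observation that a subgraph copy of $K_5$ or $K_{3,3}$ is in particular a (trivial) subdivision is exactly the intended deduction. Your self-contained alternative is also sound, and it is worth noting that you caught the one genuine subtlety there: Lemma~\ref{planar-lemma} rules out $K_5$ (since $10 > 9$) but not $K_{3,3}$ (since $9 \leq 12$), so the sharper triangle-free bound $e \leq 2n-4$ is genuinely needed for the bipartite case. That second route buys independence from Kuratowski's theorem at the cost of proving heredity of planarity under subgraphs and the two Euler-formula edge bounds; since the paper already leans on West for Kuratowski, your choice to present the citation-based argument as the final write-up is the right call.
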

	
	\begin{prop}[\cite{west}]\label{traiangulation}
		For a simple $n$-vertex planar graph $G$, the following are equivalent.
		\begin{enumerate}[label=(\alph*), itemsep=-5pt]
			\item $G$ has $3n-6$ edges.
			\item $G$ is a triangulation. (A triangulation is a graph where every face boundary is a cycle of length 3.)
			\item $G$ is a maximal planar graph.
		\end{enumerate}
	\end{prop}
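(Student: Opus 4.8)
Since this proposition is a standard textbook fact, the natural plan is to prove the three-way equivalence through Euler's formula together with the face-degree counting identity. Throughout I would assume $n \geq 3$ and observe that each of the three conditions forces $G$ to be connected, so that Euler's formula $n - e(G) + f = 2$ applies for a fixed plane embedding with $f$ faces. The second basic ingredient is the boundary-length identity $\sum_{F} \deg(F) = 2e(G)$, where the sum runs over all faces and $\deg(F)$ is the length of the boundary walk of $F$; in a simple graph with $n \geq 3$ every face satisfies $\deg(F) \geq 3$.

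First I would establish (a) $\Leftrightarrow$ (b). For (b) $\Rightarrow$ (a): if every face is a triangle then $\deg(F) = 3$ for all faces, so $2e(G) = \sum_{F} \deg(F) = 3f$, giving $f = 2e(G)/3$; substituting into Euler's formula yields $n - e(G) + 2e(G)/3 = 2$, hence $e(G) = 3n - 6$. For (a) $\Rightarrow$ (b): from $e(G) = 3n - 6$ and Euler's formula we get $f = 2n - 4$, so $\sum_{F}\deg(F) = 2e(G) = 6n - 12 = 3f$; since each of the $f$ faces contributes at least $3$ to a total that equals exactly $3f$, equality is forced, so $\deg(F) = 3$ for every face and $G$ is a triangulation.

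Next I would handle (b) $\Leftrightarrow$ (c). The direction (b) $\Rightarrow$ (c) is short: in a triangulation the three boundary vertices of any face are already pairwise adjacent, and any new edge would have to be drawn inside some face joining two of its boundary vertices; since every face has exactly three mutually adjacent boundary vertices, no simple edge can be added without a crossing or a multi-edge, so $G$ is maximal planar. For the converse (c) $\Rightarrow$ (b) I would argue by contraposition. A maximal planar graph on $n \geq 3$ vertices is $2$-connected, so every face boundary is a cycle; if some face $F$ is bounded by a cycle $v_1 v_2 \cdots v_k$ with $k \geq 4$, consider the diagonals $v_1 v_3$ and $v_2 v_4$. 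At most one of them can be an edge of $G$, since $v_1, v_2, v_3, v_4$ occur in this cyclic order on the boundary of $F$ and two interleaving chords drawn on the non-$F$ side of the cycle would cross; the remaining diagonal can then be drawn inside $F$, producing a larger simple planar graph and contradicting maximality.

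The main obstacle is the direction (c) $\Rightarrow$ (b), which has two delicate points: (i) justifying that a maximal planar graph on at least three vertices is $2$-connected, so that each face boundary is a genuine cycle rather than a more complicated closed walk, and (ii) the planar-crossing argument showing that $v_1 v_3$ and $v_2 v_4$ cannot both already be edges. I would handle (i) by noting that an isolated vertex, a cut vertex, or two separate components could each absorb an extra edge while staying simple and planar, contradicting maximality; and (ii) by the Jordan-curve observation that the chord $v_1 v_3$, together with the boundary arc $v_1 v_2 v_3$, separates $v_2$ from $v_4$, so a hypothetical edge $v_2 v_4$ (which cannot pass through the open face $F$) would be forced to cross $v_1 v_3$. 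Combining (a) $\Leftrightarrow$ (b) with (b) $\Leftrightarrow$ (c) then yields the full equivalence.
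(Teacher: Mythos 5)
The paper gives no proof of this proposition at all---it is quoted from West as a known result---so there is nothing internal to compare against; your argument is the standard textbook proof (Euler's formula combined with the face-degree identity $\sum_F \deg(F) = 2e(G)$ for (a)$\Leftrightarrow$(b), and the chord/Jordan-curve maximality argument for (b)$\Leftrightarrow$(c)), and it is correct in substance, including your flagging of the two delicate points in (c)$\Rightarrow$(b). One wording fix: the closed curve formed by the edge $v_1v_3$ together with the boundary path $v_1v_2v_3$ passes \emph{through} $v_2$, so it does not literally ``separate $v_2$ from $v_4$''; the precise statement---which your parenthetical about the open face already contains in essence---is that $v_4$ lies in the region of this curve containing $F$, while the edge $v_2v_4$ must leave $v_2$ into the non-$F$ side (the $F$-side angular sector at $v_2$ lies inside the open face, where no edge may pass), so it would be forced to cross one of the three edges $v_1v_2$, $v_2v_3$, $v_1v_3$, which is impossible; alternatively, replace the boundary path by an arc drawn through the interior of $F$ from $v_1$ to $v_3$, which yields a Jordan curve avoiding both $v_2$ and $v_4$ and separating them outright.
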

	
	Now, our result is in the following.
	\begin{theorem}
		There is no regular planar link-irregular graph.
	\end{theorem}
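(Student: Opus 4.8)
The plan is to first pin down the regularity and then extract a contradiction from the planar embedding. If $G$ is $r$-regular and planar, then $e(G)=rn/2$, so Lemma~\ref{planar-lemma} gives $rn/2 \le 3n-6 < 3n$, whence $r \le 5$; Theorem~\ref{no-regular} eliminates $r \le 4$, so it suffices to rule out $5$-regular planar link-irregular graphs. For such a graph the same inequality yields $5n/2 \le 3n-6$, i.e. $n \ge 12$.

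The first key step is a lemma that the link is always outerplanar: $G[N[v]] = K_1 \vee L(v)$ is a planar subgraph of $G$ (since $v$ is adjacent to all of $N(v)$), and a graph $H$ is outerplanar precisely when $K_1 \vee H$ is planar; hence each $L(v)$ is an outerplanar graph on $5$ vertices. The second step is a twin argument: if some $L(v)$ had a vertex $u$ of degree $4$ (a neighbor of $v$ adjacent to all other neighbors of $v$), then $N[u]=N[v]$, so interchanging $u$ and $v$ is an automorphism of $G[N[v]]$ and $L(u)\cong L(v)$, contradicting link-irregularity. Thus every link is an outerplanar graph on $5$ vertices of maximum degree at most $3$. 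There are only finitely many such graphs (one checks there are $20$: the $25$ outerplanar graphs on $5$ vertices minus the $5$ possessing a universal vertex), and since all $n$ links are pairwise non-isomorphic, $n \le 20$. As a connected $5$-regular planar graph needs at least $12$ vertices, two components would force $24>20$ vertices; hence $G$ is connected and Euler's formula applies.

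The decisive step is a face count. With $e=5n/2$ and $f=2+3n/2$, comparing $\sum_F |F| = 2e$ with $3f$ gives $\sum_{k\ge 4}(k-3)f_k = (n-12)/2$, so at most $(n-12)/2$ faces are non-triangular and $f_3 \ge n+8$. Writing $t_v$ for the number of triangular faces incident to $v$, we get $\sum_v t_v = 3f_3 \ge 3n+24$; since $t_v \le 5$, at least $24-n \ge 4$ vertices satisfy $t_v=5$. For such a vertex the five neighbors are cyclically consecutive around $v$ with consecutive pairs adjacent, so $C_5 \subseteq L(v)$; combined with outerplanarity and $\Delta(L(v)) \le 3$ this forces $L(v) \in \{C_5,\ C_5+\text{chord}\}$, a set of only two isomorphism types. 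With at least four such vertices, the pigeonhole principle produces two isomorphic links, contradicting link-irregularity.

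I expect the real obstacle to be exactly this last step for $n>12$. When $n=12$ the graph is a triangulation (by Proposition~\ref{traiangulation}, the icosahedron), and every link already contains $C_5$, so the contradiction is immediate; but for $n>12$ the embedding is not a triangulation, and one must show that the small total face-excess $(n-12)/2$ still leaves many vertices with fully triangulated neighborhoods, whose links then collapse into the two-element family $\{C_5,\ C_5+\text{chord}\}$. Managing the tension between the outerplanar/twin restriction (which caps $n$ from above) and the face count (which needs $n$ capped in order to guarantee enough fully triangulated vertices) is the crux of the argument.
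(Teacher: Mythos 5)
Your proof is correct, and it takes a genuinely different route from the paper's. The two arguments coincide at the start: $r\le 5$ from Lemma~\ref{planar-lemma}, then $r=5$ from Theorem~\ref{no-regular}, and the same twin argument forbidding a degree-$4$ vertex in any link. From there the paper bounds the admissible link types among the $34$ graphs of order $5$, concludes $n\in\{10,12,14,16,18\}$ (using Theorem~\ref{9-no-regular} and parity), and disposes of each order separately, relying on outside inputs: the edge count for $n=10$, Proposition~\ref{traiangulation} plus the uniqueness of the icosahedron for $n=12$, Owens' theorem that no $5$-regular planar graph on $14$ vertices exists, and the Hasheminezhad--McKay--Reeves uniqueness of the $5$-regular planar graphs on $16$ and $18$ vertices followed by inspection of those two graphs. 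You instead extract a stronger structural restriction: $G[N[v]]$ is a planar subgraph of $G$ equal to $L(v)$ joined with one extra vertex, and $K_1\vee H$ is planar exactly when $H$ is outerplanar, so every link is an outerplanar graph of order $5$ with $\Delta\le 3$; your counts ($25$ outerplanar graphs of order $5$, of which $5$ have a universal vertex) are correct, giving $n\le 20$, which also forces $G$ to be connected. Euler's formula then yields $\sum_{k\ge4}(k-3)f_k=(n-12)/2$, hence $f_3\ge n+8$, hence at least $24-n\ge 4$ vertices all five of whose corners lie on triangular faces; each such vertex has a spanning $C_5$ in its link, and outerplanarity with $\Delta\le 3$ leaves only the two isomorphism types $C_5$ and $C_5$ plus one chord, so the pigeonhole principle contradicts link-irregularity. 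Your route buys self-containment --- no classification literature, no computer inspection of explicit graphs, and all orders treated uniformly --- while the paper's route buys concreteness, exhibiting the actual graphs that must be ruled out. One remark: the worry in your final paragraph is unfounded, since your own estimates already close the gap --- $n\le 20$ guarantees four fully triangulated vertices against only two available link types (indeed $n\le 21$ would suffice), so no additional argument is needed.
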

	\begin{proof}
		Suppose $G$ is a $r$-regular planar link-irregular graph. Using Lemma \ref{palanar-lemma}, we can conclude that $r\leq 5$ since 
		$
		\sum_{v\in V(G)}{d(v)}= nr = 2e(G)\leq 2(3n-6) \Rightarrow{} r \leq 6 - \frac{12}{n}.
		$
		Also, by Theorem \ref{no-regular}, the only possibility for $r$ is 5, which we now consider. Since $G$ is a 5-regular link irregular graph, its number of vertices cannot be more than 34, which is the number of non-isomorphic graphs on 5 vertices. Also, since $G$ is 5-regular, the number of vertices of $G$ must be an even number.
		In \cite{akbar_paper2}, all these 34 non-isomorphic graphs of order 5 are provided. You can see all of them in Figure \ref{fig:order5}.
		\begin{figure}[H]
			\centering
			\includegraphics[scale=0.095]{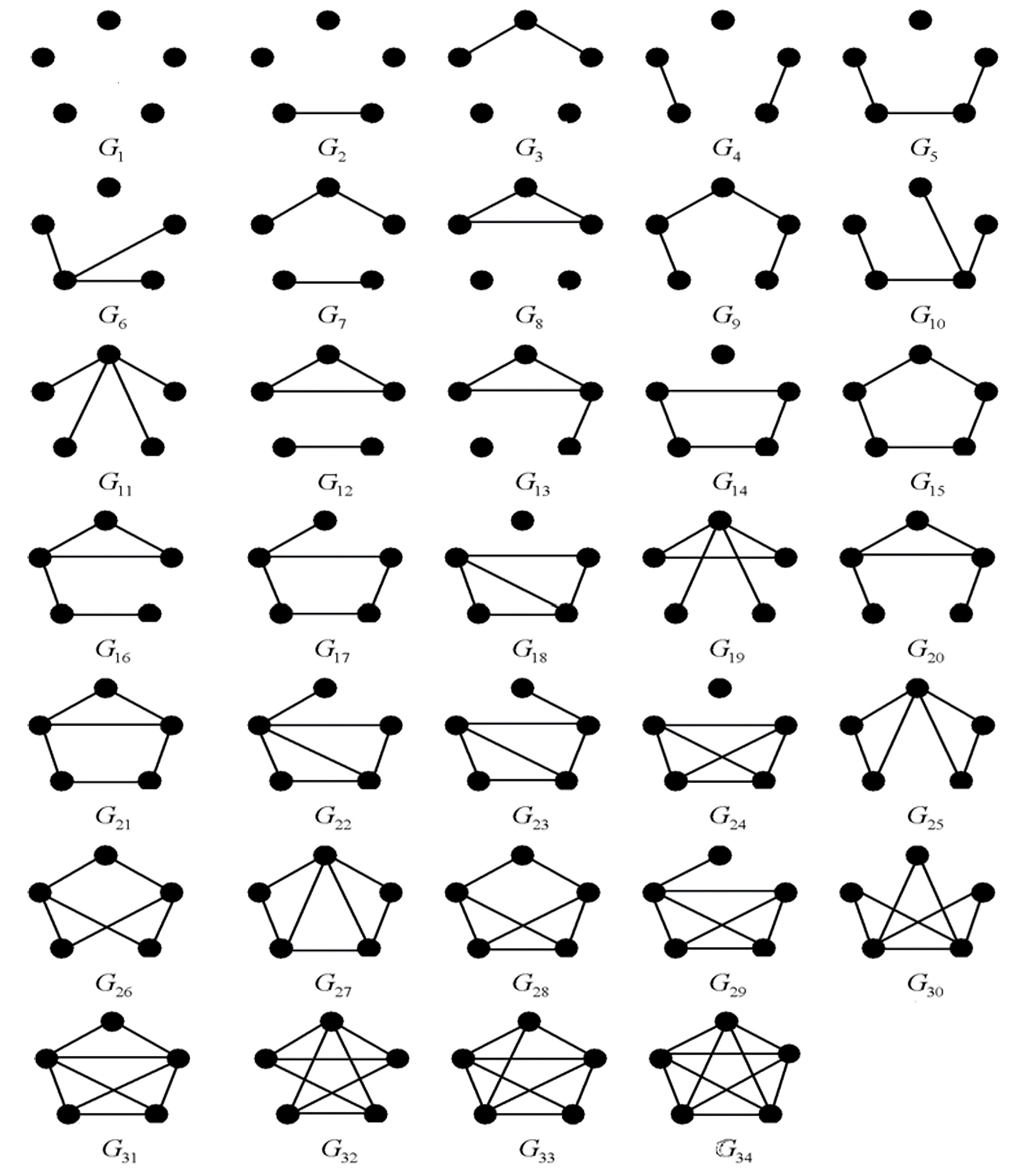}
			\caption{All non-isomorphic graphs of order 5 \cite{akbar_paper2}.}
			\label{fig:order5}
		\end{figure}
		
		Since $G$ is planar, $G$ cannot have $K_5$ or $K_{3,3}$ as a subgraph by Corollary \ref{k5-and-k33}. Consequently, the links of vertices in $G$ cannot have $K_5$, $K_4$, or $K_{3,2}$ as subgraphs. In addition, no vertex in the link of any vertex can have degree 4; otherwise, the vertex of degree 4 in the link and the link owner vertex would have isomorphic links, which contradicts the link-irregularity of the graph. Hence, $G$ does not have any of the following links from the figure
		\[G_{11}, G_{19}, G_{21}, G_{22}, G_{24}, G_{25}, G_{26}, G_{27}, G_{28}, G_{29}, G_{30}, G_{31}, G_{32}, G_{33}, G_{34}.\]
		
		Then, by this list and Theorem \ref{9-no-regular}, the possible number of vertices for $G$ are 10, 12, 14, 16 and 18. For $n=10$, the number of edges must be 25. But, by Lemma \ref{planar-lemma}, we have $3n(G)-6 = 3(10)-6 = 24 <25=e(G)$ which contradicts the planarity of $G$.\\
		For $n=12$, we have $3n(G)-6 = 3(12)-6 = 30 =e(G)$. Then, by Proposition \ref{traiangulation}, $G$ is a triangulation and each face has length 3. Also, as mentioned in~\cite{west}, there is a unique graph (up to graph isomorphism) with the same properties as $G$, namely the icosahedron graph, which is shown in the following figure, which is clearly not a link-irregular graph.
		\begin{figure}[H]
			\centering
			\includegraphics[scale=0.25]{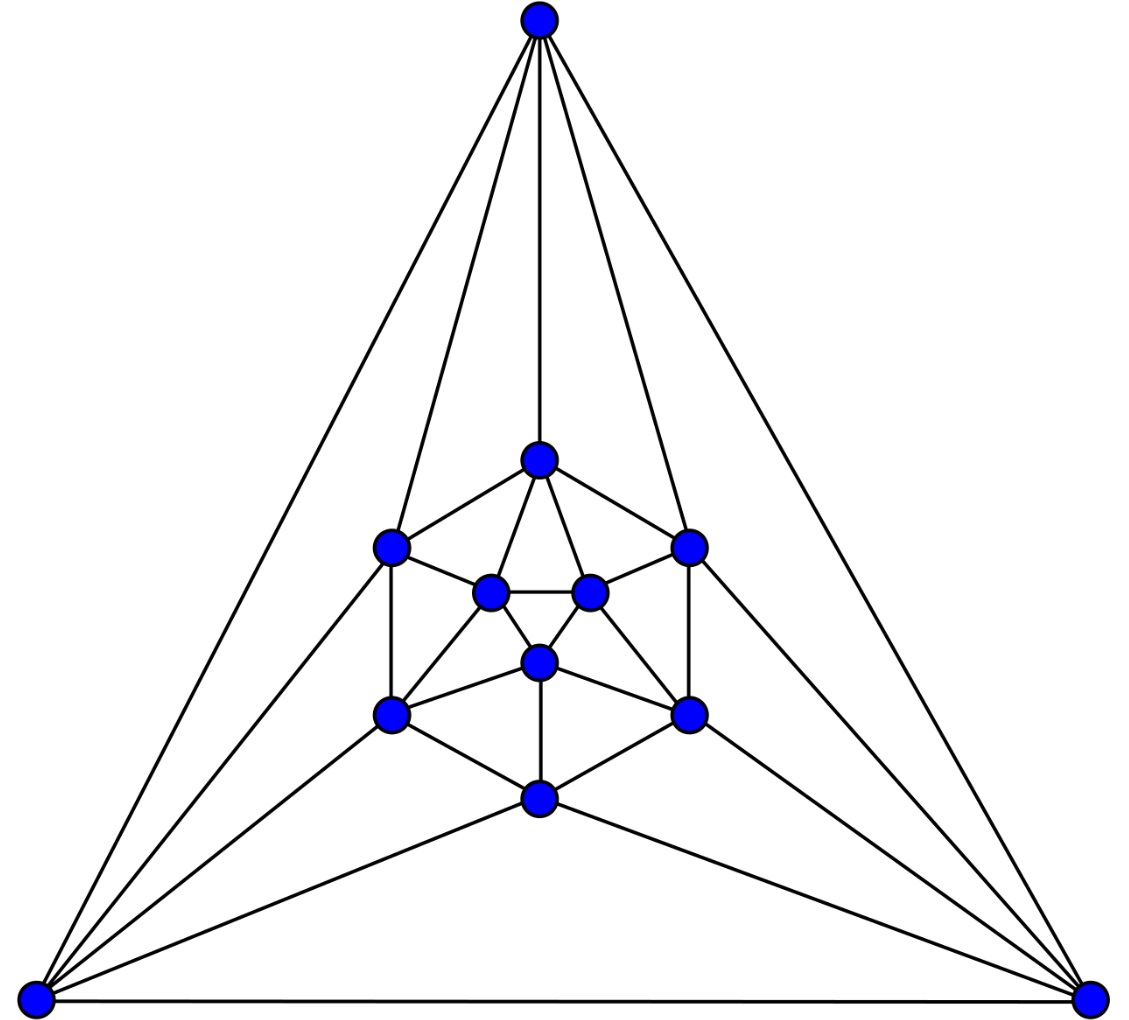}
			\caption{The icosahedron graph.}
			\label{fig:icosahedron}
		\end{figure}
		
		For $n=14$, in \cite{owens}, it is proved that the 5-regular graph on 14 vertices is not planar. Then, there clearly can not be a 5-regular planar link-irregular graph on 14 vertices.\\
		For $n=16$ and 18, in \cite{mahdieh}, it is stated that there exist unique 5-regular planar graphs (up to isomorphism) on 16 and 18 vertices, respectively. Also, These planar graphs can be generated using the online interface at \footnote{\url{https://combos.org/plantri}}, which provides access to the Plantri program developed by Gunnar Brinkmann and Brendan McKay. The website outputs adjacency lists, where the neighbors of each vertex are listed in clockwise (cw) order according to a planar embedding. We used a custom SageMath script to visualize the graphs; this script is available on the GitHub repo. The graphs are shown in the figure \ref{fig:moreplanargraphs}.
		%\begin{figure}[H]
		%\centering
		%\includegraphics[scale=0.22]{}
		%\caption{The unique 5-regular planar graphs on 16 and 18 vertices.}
		%\label{fig:moreplanargraphs}
		%\end{figure}
		
		\begin{figure}[H]
			\centering
			\begin{subfigure}[t]{0.35\textwidth}
				\centering
				\includegraphics[width=\textwidth]{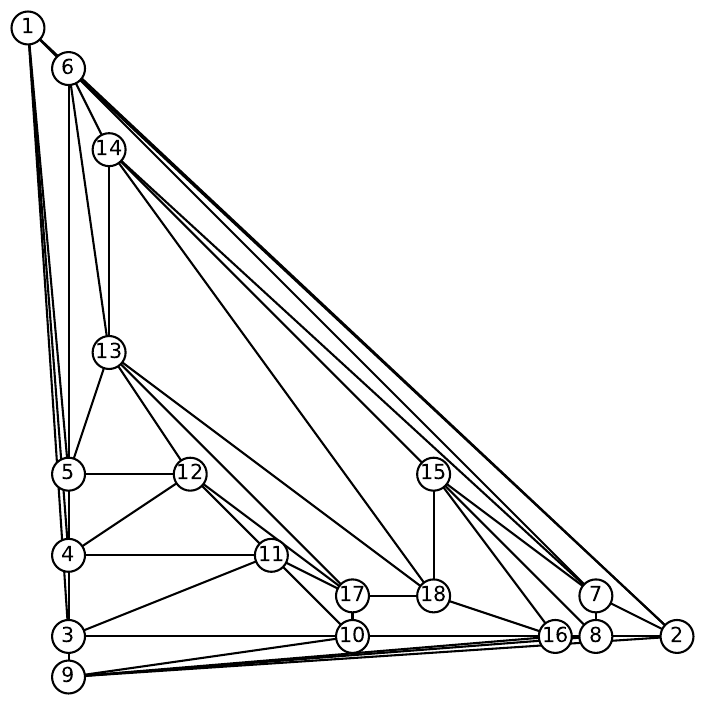}
				\caption{5-regular planar graph with 18 vertices}
				\label{fig:planar18}
			\end{subfigure}
			\hfill
			\begin{subfigure}[t]{0.35\textwidth}
				\centering
				\includegraphics[width=\textwidth]{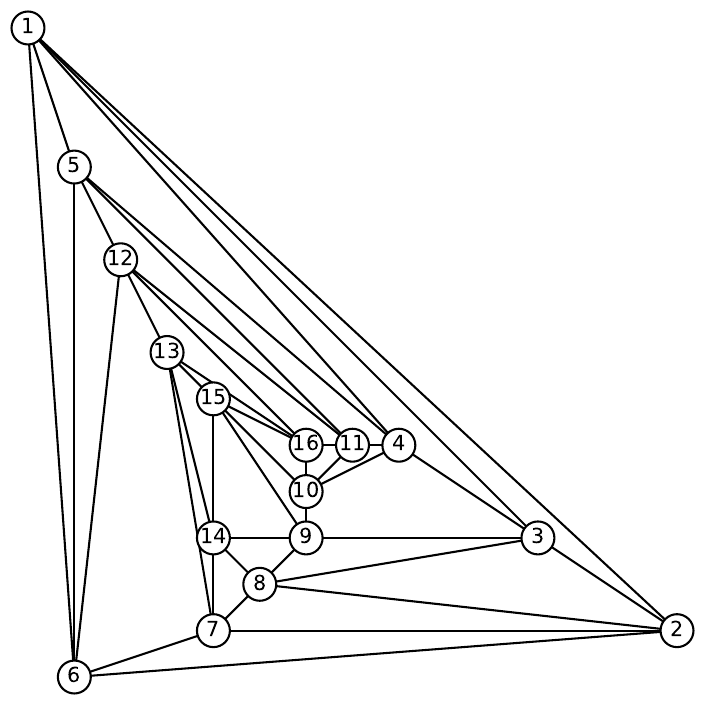}
				\caption{5-regular planar graph with 16 vertices}
				\label{fig:planar16}
			\end{subfigure}
			\caption{The unique 5-regular planar graphs on 16 and 18 vertices.}
			\label{fig:moreplanargraphs}
		\end{figure}
		
		By inspecting the vertex connectivity in Figure \ref{fig:moreplanargraphs2} that are graphs in Figure \ref{fig:moreplanargraphs} by rearranging the location of vertices, one sees that the links of the vertices 1 and 2 in both graphs are isomorphic, so neither of these are link-irregular. \\
		
		\begin{figure}[htbp]
			\centering
			\begin{subfigure}[t]{0.35\textwidth}
				\centering
				\includegraphics[width=\textwidth]{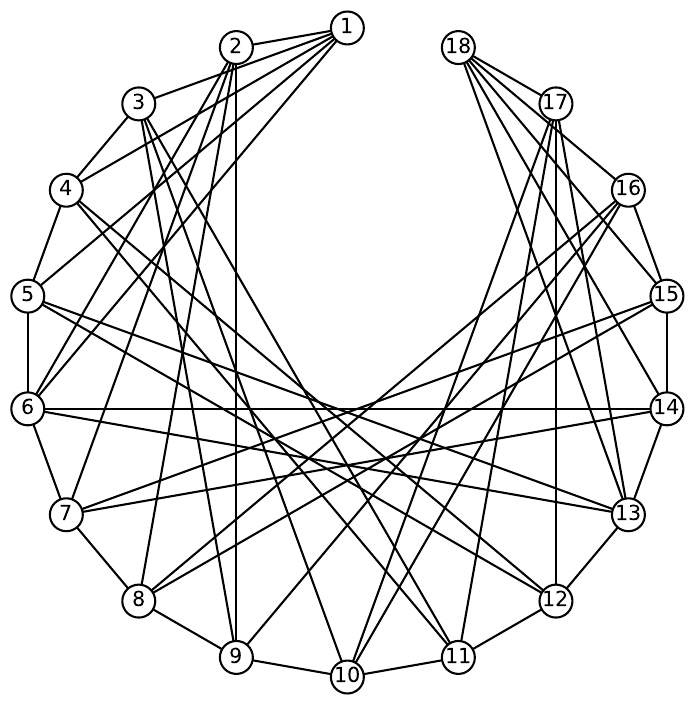}
				\caption{5-regular planar graph with 18 vertices}
				\label{fig:planar18}
			\end{subfigure}
			\hfill
			\begin{subfigure}[t]{0.35\textwidth}
				\centering
				\includegraphics[width=\textwidth]{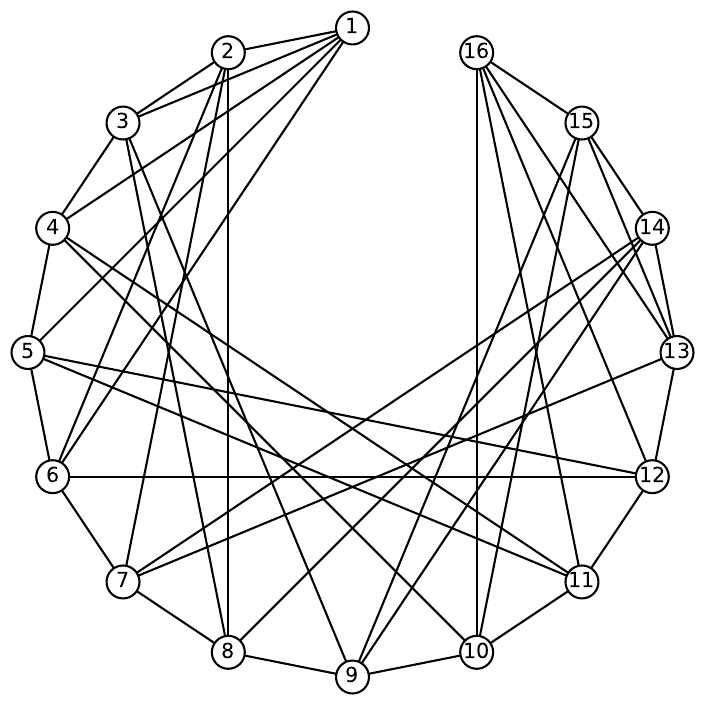}
				\caption{5-regular planar graph with 16 vertices}
				\label{fig:planar16}
			\end{subfigure}
			\caption{The unique 5-regular planar graphs on 16 and 18 vertices.}
			\label{fig:moreplanargraphs2}
		\end{figure}
		Hence, no 5-regular link-irregular graph is planar.
		
	\end{proof}
	
	The edge set of the unique 5-regular planar graph on 16 vertices follows:\\
	$\{(1,2), (1,3), (1,4), (1,5), (1,6),
	(2,6), (2,7), (2,8), (2,3),
	(3,8), (3,9), (3,4),
	(4,10), (4,11),\\ (4,5),
	(5,11), (5,12), (5,6),
	(6,12), (6,7),
	(7,13), (7,14), (7,8),
	(8,14), (8,9),
	(9,14), (9,15),\\ (9,10),
	(10,15), (10,16), (10,11),
	(11,16), (11,12),
	(12,16), (12,13),
	(13,16), (13,15), (13,14),\\
	(14,15),
	(15,16)\}$.
	\ \\
	
	The edge set of the unique 5-regular planar graph on 18 vertices follows:\\
	$\{(1,2), (1,3), (1,4), (1,5), (1,6),
	(2,6), (2,7), (2,8), (2,9),
	(3,9), (3,10), (3,11), (3,4),
	(4,11),\\ (4,12), (4,5),
	(5,12), (5,13), (5,6),
	(6,13), (6,14), (6,7),
	(7,14), (7,15), (7,8),
	(8,15), (8,16),\\ (8,9),
	(9,16), (9,10),
	(10,16), (10,17), (10,11),
	(11,17), (11,12),
	(12,17), (12,13),
	(13,17),\\ (13,18), (13,14),
	(14,18), (14,15),
	(15,18), (15,16),
	(16,18),
	(17,10), (17,18), (17,12),\\ (17,11),
	(18,13), (18,17), (18,16), (18,15), (18,14)\}$.
	
	\section*{Acknowledgments}
	We thank Clark Kimberling for his useful comments and suggestions. This research was supported by the UExplore Undergraduate Research Program at the University of Evansville.

\end{document}